 \newtheorem{thm}{Theorem}[section]
 \newtheorem{prop}[thm]{Proposition}
 \newtheorem{lem}[thm]{Lemma}
 \newtheorem{cor}[thm]{Corollary}
 \theoremstyle{definition}
 \newtheorem{example}[thm]{Example}
 \newtheorem{defn}[thm]{Definition}
 \newtheorem{remark}[thm]{Remark}
\numberwithin{equation}{section}
\newcommand{\bbZ}{{\mathbb{Z}}}
\newcommand{\bbP}{{\mathbb{P}}}
\newcommand{\bbA}{{\mathbb{A}}}
\newcommand{\bbC}{{\mathbb{C}}}
\newcommand{\bbF}{{\mathbb{F}}}
\newcommand{\Cr}{\operatorname{Cr}}
\newcommand{\GL}{\operatorname{GL}}
\newcommand{\SL}{\operatorname{SL}}
\newcommand{\PGL}{\operatorname{PGL}}
\newcommand{\PSL}{\operatorname{PSL}}
\newcommand{\Aut}{\operatorname{Aut}}
\newcommand{\Spec}{\operatorname{Spec}}
\newcommand{\ed}{\operatorname{ed}}
\newcommand{\im}{\operatorname{im}}
\newcommand{\Pic}{\operatorname{Pic}}
\begin{document}

\title[$G$-unirationality of del Pezzo surfaces (degree 3 and 4)]{Equivariant
unirationality of del Pezzo surfaces of degree 3 and 4}

\author{Alexander Duncan}
\address{
Department of Mathematics, College of Arts and Sciences, University of South Carolina, Columbia, SC, USA}
\email{duncan@math.sc.edu}
\thanks{The author was partially supported by
National Science Foundation
Research Training Grant DMS~0943832.}

\subjclass[2010]{
14M20, 
14L30, 
14J26  
}

\begin{abstract}
A variety $X$ with an action of a finite group $G$ is said to be
$G$-unirational if there is a $G$-equivariant dominant rational map
$V \dasharrow X$ where $V$ is a faithful linear representation of $G$.
This generalizes the usual notion of unirationality.
We determine when $X$ is $G$-unirational for any complex
del Pezzo surface $X$ of degree at least $3$.
\end{abstract}

\maketitle

\section{Introduction}

Recall that a variety $X$ is \emph{unirational} if there exists a
dominant rational map $\bbA^n \dasharrow X$ where $\bbA^n$ is an affine
space.  If a variety $X$ has an action of a finite group $G$,
then $X$ is \emph{$G$-unirational} if there exists a dominant rational
$G$-equivariant map $V \dasharrow X$ where $V$ is a faithful linear
representation of $G$.
When $G$ is trivial, the linear representation is just an affine space
and we recover the usual notion of unirationality.

One application of $G$-unirationality is the construction of
\emph{versal} or \emph{generic} objects in algebra and number theory.
For example, consider the following classical result of
Hermite~\cite{Her61Sur-linvariant}.
For any separable field extension $L/K$ of degree $5$ of a field $K$ of
characteristic not $2$, there exists a generator $x \in L$ whose minimal
polynomial has the form
\[ x^5 + bx^3 + cx + c = 0 \]
where $b$ and $c$ are elements in $K$
(see~\cite{Kra06A-result} for a modern exposition
and \cite{Cor76Algebraic} for an alternate proof).
The original proof reduces to the following:

\begin{thm}[Hermite] \label{thm:Hermite}
Let $X$ be the Clebsch diagonal cubic surface given by
\begin{equation} \label{eq:Clebsch}
\sum_{i=1}^5 x_i = \sum_{i=1}^5 x_i^3 = 0
\end{equation}
in $\bbP^4$, and let $S_5$ act on $X$ by permutations of the coordinates
$x_1, \ldots, x_5$.  There exists a dominant rational $S_5$-equivariant
map $V \dasharrow X$ where $V$ is a linear representation of $S_5$.
In other words, $X$ is $S_5$-unirational.
\end{thm}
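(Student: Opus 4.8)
The plan is to build the map by hand, via a Tschirnhaus-type substitution, after reinterpreting $X$ through power sums. Writing $p_k(x) = \sum_{i=1}^5 x_i^k$, the defining equations \eqref{eq:Clebsch} say $p_1(x)=p_3(x)=0$, and by Newton's identities this is equivalent to $e_1(x)=e_3(x)=0$, where $e_k$ is the $k$-th elementary symmetric function. Thus, on the locus of distinct coordinates, a point of $X$ is the same datum as an unordered quintuple of roots of a quintic $z^5+bz^3+dz+e$, taken up to the scaling $x_i\mapsto\lambda x_i$; this is the source of the link with Hermite's normal form and it strongly suggests taking the permutation representation as the domain.

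First I would set $V=\bbC^5$ with its coordinate-permutation action, which is faithful, and seek the map in the equivariant shape $x_i=g(t_i)$ with $g(z)=c_0+c_1z+c_2z^2+c_3z^3+c_4z^4$, where each coefficient $c_j$ is an $S_5$-invariant rational function of $t_1,\dots,t_5$. Any such assignment $t\mapsto(x_1,\dots,x_5)$ is automatically $S_5$-equivariant, and it lands in $X$ exactly when $p_1(x)=0$ and $p_3(x)=0$. The first condition is linear in the $c_j$ and is solved by eliminating $c_0$; substituting, the second condition becomes a single homogeneous cubic in $(c_1:c_2:c_3:c_4)$ with coefficients in the field $K=\bbC(t_1,\dots,t_5)^{S_5}$ of symmetric functions, that is, the equation of a cubic surface $\Sigma$ over $K$. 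A $K$-point of $\Sigma$ is precisely the data of an $S_5$-equivariant rational map $V\dasharrow X$ of this form.

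The main obstacle is producing such a $K$-point. There is no visibly free one: the naive choices $g(z)=z$ and $g(z)=z^2$, normalized to kill $p_1(x)$, both fail to kill $p_3(x)$, and a general cubic surface over a field need not carry a rational point at all. Finding the point is exactly the content of Hermite's computation, and I would carry it out by exhibiting either an explicit solution or a $K$-line on $\Sigma$, exploiting the special shape of the cubic form coming from the power-sum identities. To keep the bookkeeping clean one may replace $V$ by $\bbC^5\oplus\mathbf{1}^{\oplus r}$ (still faithful), letting the trivial summands supply free $S_5$-invariant parameters that rationally sweep out a neighborhood of the point once it is found.

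Finally I would verify dominance. A solution yields a nonconstant equivariant family of points of $X$, and since $X$ is an irreducible surface it suffices to check that the differential of $V\dasharrow X$ has rank $2$ at one general $t$, a finite computation in two independent coordinates of $X$ (for instance $p_2$ and $p_4$) pulled back to $V$. Should a single Tschirnhaus family have image only a curve, I would raise the dimension by a secant construction: from two equivariant maps to $X$, sending a pair of points to the residual third intersection of their chord with the cubic $X$ produces a new equivariant map with generically larger image, and iterating reaches a dominant one. Throughout, the delicate point is to keep every choice defined over $K$, so that the final map is genuinely rational and $S_5$-equivariant rather than merely defined after an auxiliary extension.
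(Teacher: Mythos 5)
Your setup is correct and is in fact the classical Hermite--Tschirnhaus framing: writing the Clebsch equations as $p_1=p_3=0$, equivalently $e_1=e_3=0$, and seeking an equivariant map of the form $x_i=g(t_i)$ with $S_5$-invariant coefficients reduces the theorem to finding a $K$-point on a cubic surface $\Sigma$ over $K=\bbC(t_1,\dots,t_5)^{S_5}$. But that is exactly where your proposal stops, and that step is not a computation you have merely deferred --- it \emph{is} the theorem. The surface $\Sigma$ is (up to the normalizations you made) the twist of the Clebsch cubic by the generic $S_5$-torsor, and by the versality correspondence used throughout the paper (Theorem~1.1 of~\cite{DunRei15Versality} together with Theorem~\ref{thm:Manin}), the existence of a $K$-point on this twist is equivalent to the $S_5$-unirationality you are trying to prove. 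So the reformulation, while faithful to Hermite, transfers the entire difficulty into the sentence ``I would carry it out by exhibiting either an explicit solution or a $K$-line on $\Sigma$,'' with no mechanism proposed for producing either; as you yourself note, a cubic surface over a non-closed field need not have any rational point, and you identify no special feature of $\Sigma$ that forces one. Without that point, nothing has been proved.

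For contrast, the paper's proof actually constructs the map in two genuinely geometric steps: first, $A_5$-unirationality, obtained from a faithful $3$-dimensional representation of $A_5$, projection $\bbC^3\dasharrow\bbP^2$, and blow-up of the unique $6$-point $A_5$-orbit, which produces the Clebsch; second, passage from $A_5$ to $S_5$ using the twisted product $X\times X$ (swap action for odd elements) composed with the third-intersection map $\omega:X\times X\dasharrow X$, with dominance forced by the absence of $S_5$-fixed points and of faithful $S_5$-actions on $\bbP^1$. It is worth noting that your closing ``secant construction'' remark is precisely this third-intersection device, but you deploy it only as a patch for dominance; in the paper it is the key tool that reduces the group-theoretic difficulty from $S_5$ to $A_5$ (more generally, from $G$ to an index-$2$ subgroup, Theorem~\ref{thm:cubicIndex2}), where a fixed-point-free group action can be handled by an explicit representation-theoretic construction. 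If you want to salvage your approach, that is the missing idea: use the chord map to halve the group first, rather than attack the full $S_5$-invariant cubic $\Sigma$ head-on.
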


Both Hermite and Coray's proofs take advantage of special features of
the Clebsch surface which do not generalize to other surfaces.
In this paper, we characterize equivariant unirationality for all
complex del Pezzo surfaces of degree $d \ge 3$.
In particular, we obtain a new proof of
Theorem~\ref{thm:Hermite}.

Before stating the main theorem, we discuss connections between
the equivariant and arithmetic notions of unirationality.
The geometric action of the group $G$ is analogous to the
arithmetic action of the absolute Galois group of the base field.
In~\cite{DunRei15Versality}, this analogy is made precise
(using the term ``$G$-very versal'' instead of ``$G$-unirational'').
One can transform results about arithmetic unirationality into
results about equivariant unirationality, and vice versa.
Consequently, while our main focus in this paper is the base field $\bbC$,
most of our constructions use, or are inspired by, arithmetic results.

In the arithmetic situation, if a smooth variety is
$k$-unirational, then $X$ has a rational $k$-point.
For smooth geometrically unirational varieties, it is still an open question
whether the converse is true.
However, in particular we have the following
from Theorems 29.4 and 30.1 of~\cite{Man86Cubic}:

\begin{thm}[Manin] \label{thm:Manin}
Let $X$ be a del Pezzo surface of degree $d \ge 3$ over a field $k$
of characteristic $0$.
Then $X$ is $k$-unirational if and only if $X$ has a rational $k$-point.
\end{thm}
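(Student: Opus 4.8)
The plan is to prove the two implications separately; the forward direction is elementary, and the reverse direction is the substance of Manin's argument.

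\emph{Unirational implies a rational point.} Suppose $X$ is $k$-unirational, witnessed by a dominant rational map $f \colon \bbA^n \dasharrow X$ defined on a dense open $U \subseteq \bbA^n$. Since $\Char k = 0$, the field $k$ is infinite, so $U(k)$ is Zariski dense in $\bbA^n$; choosing any $P \in U(k)$ gives $f(P) \in X(k)$, whence $X(k) \ne \emptyset$. Note that this direction uses neither the del Pezzo hypothesis nor the bound on the degree.

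\emph{A rational point implies unirationality.} Here I would argue by cases on the degree $d$. For $d \ge 5$ I would invoke the structure theory of minimal del Pezzo surfaces: such an $X$ with a rational point is in fact $k$-rational. Concretely, one uses the Galois action on the finite set of $(-1)$-curves to locate a Galois-stable collection of pairwise skew lines that may be contracted over $k$, descending to $\bbP^2$ or to a surface recognizably $k$-rational; rationality gives unirationality for free. The cases $d = 4$ and $d = 3$ are the heart of the matter. For $d = 4$, I would reduce to the cubic case: blowing up a rational point lying on none of the $16$ lines produces a smooth cubic surface birational to $X$ (the degree drops by one) carrying a rational point on the exceptional curve, and since unirationality is a birational invariant it suffices to treat the blow-up. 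For $d = 3$, $X$ is a smooth cubic surface, and I would use Segre's tangent-plane construction: for a rational point $P$ avoiding all $27$ lines, the section $T_P X \cap X$ is a plane cubic with a double point at $P$, hence a rational curve $C$ defined over $k$; letting $R$ vary over $C \cong \bbP^1$ and taking the residual point of the tangent section $T_R X \cap X$ yields a dominant rational map from a $\bbP^1$-bundle over $C$ onto $X$, exhibiting $X$ as unirational.

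The main obstacle is the \emph{general-position} requirement underlying both constructions: the arguments above want a rational point avoiding the lines and other degenerate loci, yet one is handed a priori only a single, possibly special, $k$-point, and one cannot assume $X(k)$ is dense before unirationality has been established. The crux is therefore to show that a usable point is available regardless of where the given point sits. This is resolved by a careful case analysis — for instance, by checking that the Segre sweep remains dominant when $P$ lies on a line (using the line itself as a rational curve through $P$ together with a neighbouring family of tangent sections), or by directly producing from any single $k$-point a rational curve in good enough position to sweep. Verifying that the parametrizing family is genuinely dominant no matter how special the initial point is — and similarly controlling the blow-up reduction in degree $4$ when the point is special — is exactly the detailed work carried out in Manin's Theorems 29.4 and 30.1.
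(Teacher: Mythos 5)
The paper does not actually prove this statement: it is quoted as a known result, imported directly from Manin's \emph{Cubic Forms} (Theorems 29.4 and 30.1), which is why it carries Manin's name. Your outline --- the infinite-field argument for the easy direction, rationality of pointed del Pezzo surfaces for $d \ge 5$, the blow-up reduction of $d = 4$ to the cubic case, and Segre's tangent-plane sweep for $d = 3$ --- is a faithful sketch of precisely that cited argument, and the one genuinely hard step (producing a point in good position when the given $k$-point is special) is the part you, like the paper itself, defer to Manin, so the proposal is correct at the same level of detail as the paper's own treatment.
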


\begin{remark}
The assumption of characteristic $0$ is unnecessary in the above theorem,
but this is all we need.  Manin's proof also applies when
$k$ has enough elements or when $d \ge 5$.
The remaining cases have since been settled;
see \cite{Kol02Unirationality} for $d=3$,
and \cite{Pie12On-the-unirationality} or \cite{Kne15Degree}
for $d = 4$.
\end{remark}

Now, consider a del Pezzo surface $X$ of degree $d \ge 3$ with a faithful
$G$-action.
Naively, one might expect that the corresponding equivariant result
would be that $X$ is $G$-unirational if and only if it has a $G$-fixed
point.
Indeed, using the machinery of~\cite{DunRei15Versality},
one can show that the existence of a $G$-fixed point implies that $X$ is
$G$-unirational (see Corollary~\ref{cor:fix2Guni}).
However, the other direction of the naive analog fails in general.
For example, the Clebsch cubic~\eqref{eq:Clebsch} is $S_5$-unirational
but has no $S_5$-fixed points.

Nevertheless, if one assumes that $G$ is abelian then $X$ is
$G$-unirational if and only if $X$ has a $G$-fixed point
(see Corollary~\ref{cor:abelSubgroups}).
This provides an obstruction for $G$-unirationality: every abelian
subgroup must have a fixed point.
This turns out to be the only obstruction for del Pezzo surfaces of
degree $d \ge 3$.

\begin{thm} \label{thm:main}
Let $X$ be a del Pezzo surface of degree $d \ge 3$ with a faithful
$G$-action.
Then the following are equivalent.
\begin{enumerate}
\item $X$ is $G$-unirational.
\item $X$ has an $A$-fixed point for all abelian subgroups $A$ in $G$.
\item $X$ is $G_p$-unirational for all Sylow $p$-subgroups $G_p$
and all primes $p$.
\item $X$ is $G_p$-unirational for all Sylow $p$-subgroups $G_p$
and all primes $p$ dividing $d$.
\end{enumerate}
\end{thm}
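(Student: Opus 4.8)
The plan is to establish the cycle of implications $(1) \Rightarrow (3) \Rightarrow (4) \Rightarrow (2) \Rightarrow (1)$; the remaining implication $(1) \Rightarrow (2)$ then follows and serves as a consistency check. Three of these are formal. For $(1) \Rightarrow (3)$, restrict a dominant $G$-equivariant map $V \dasharrow X$ along a Sylow $p$-subgroup $G_p \le G$: the representation $V|_{G_p}$ is again faithful and the map is $G_p$-equivariant and dominant, so $X$ is $G_p$-unirational. The implication $(3) \Rightarrow (4)$ is trivial, since (4) only imposes a subset of the conditions in (3). Likewise $(1) \Rightarrow (2)$ follows by restricting to an abelian subgroup $A \le G$ and applying Corollary~\ref{cor:abelSubgroups}, which identifies $A$-unirationality with the existence of an $A$-fixed point.

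Throughout I would use two inputs from the versality machinery of \cite{DunRei15Versality}: Corollary~\ref{cor:fix2Guni} (a $P$-fixed point yields $P$-unirationality) and the Sylow reduction principle, which asserts that $X$ is $G$-unirational precisely when it is $G_p$-unirational for every Sylow $p$-subgroup. The key geometric lemma is that for a $p$-group $P$ acting faithfully on $X$ with $p \nmid d$, the fixed locus $X^P$ is nonempty. To prove it, note that over $\bbC$ the topological Euler characteristic is $\chi(X) = 12 - d$. When $p \nmid 12 - d$, Smith theory gives $\chi(X^P) \equiv \chi(X) \not\equiv 0 \pmod p$, forcing $X^P \neq \emptyset$. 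The only pairs with $p \mid 12-d$ but $p \nmid d$ and $3 \le d \le 9$ are $(d,p) = (5,7)$ and $(d,p) = (7,5)$; in each case every finite subgroup of $\Aut(X)$ has order prime to $p$ (for $d=5$ one has $\Aut(X) \cong S_5$, and for $d=7$ the surface is toric with finite automorphisms of order prime to $5$), so no nontrivial such $P$ exists and the statement is vacuous.

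These inputs give $(4) \Rightarrow (2)$ cleanly. Fix an abelian $A \le G$ and write $A = \prod_p A_p$ as the product of its Sylow subgroups. Combining Corollary~\ref{cor:abelSubgroups} with the Sylow reduction principle, the existence of an $A$-fixed point is equivalent to the existence of an $A_p$-fixed point for every $p$. For $p \nmid d$ this holds by the geometric lemma, while for $p \mid d$ the group $A_p$ lies in some Sylow $p$-subgroup $G_p$, which is $G_p$-unirational by hypothesis (4); restricting to $A_p$ and applying Corollary~\ref{cor:abelSubgroups} produces the required fixed point. Hence $X^A \neq \emptyset$, proving (2).

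The substantive implication is $(2) \Rightarrow (1)$. By the Sylow reduction principle it suffices to prove that each $G_p$ is $G_p$-unirational. For $p \nmid d$ the geometric lemma gives a $G_p$-fixed point, and Corollary~\ref{cor:fix2Guni} finishes. The hard case is $p \mid d$: here a $p$-group need not have a global fixed point---the $3$-Sylow of $S_5$ on the Clebsch cubic is exactly such an example---so Corollary~\ref{cor:fix2Guni} does not apply, and one must build the equivariant parametrization directly from the fixed points of the abelian subgroups. I expect this to be the main obstacle, and it is where the geometry of each degree enters: one analyzes the possible $p$-group actions through their image in the Weyl group of the root lattice $K_X^\perp$, uses the abelian fixed points to produce $P$-invariant rational curves, conic bundle structures, or equivariant contractions to surfaces of lower degree, and thereby reduces to a situation that is manifestly $P$-unirational. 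The degree $3$ and $4$ cases, carried out in the sections that follow, are the most delicate and constitute the heart of the paper; assembling them with the reductions above completes the proof.
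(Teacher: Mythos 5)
The proposal has a fatal circularity. Your two ``inputs from the versality machinery'' are not both available: Corollary~\ref{cor:fix2Guni} is, but the ``Sylow reduction principle'' (that $X$ is $G$-unirational as soon as it is $G_p$-unirational for every Sylow subgroup) is not a known general fact --- it \emph{is} the hard implication (c)$\Rightarrow$(a) of the very theorem you are proving. The remarks following Theorem~\ref{thm:main} in the paper address exactly this: the arithmetic analogue of condition (c) is the existence of a $0$-cycle of degree $1$, and ``$0$-cycle of degree $1$ implies rational point'' is \emph{false} for general geometrically rational surfaces \cite{ColCor79Lequivalence}; for del Pezzo surfaces of degree $d \ge 4$ it holds by a theorem of Coray \cite{Cor77Points} (an arithmetic input, not a consequence of \cite{DunRei15Versality}), but for cubic surfaces it is an open conjecture of Cassels and Swinnerton-Dyer, and the paper explicitly presents Theorem~\ref{thm:main} as \emph{evidence} for that conjecture. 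So in the case $d=3$ --- the hardest case --- the reduction you take as given is unavailable, and both places you invoke it are circular: in (2)$\Rightarrow$(1), where you reduce $G$-unirationality to $G_p$-unirationality, and in (4)$\Rightarrow$(2), where you deduce ``each $A_p$ has a fixed point $\Rightarrow$ $A$ has a fixed point.'' The latter is also not formal on its own: $X^{A_p}$ can be positive-dimensional (e.g.\ an elliptic curve, as in Obstructions C and D) or a finite set that $A_q$ permutes freely, so one cannot conclude without an argument; in the paper this local-to-global statement for fixed points is a \emph{consequence} of the main theorem, never an input.

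The paper's actual route avoids any Sylow reduction for $d\le 5$: it proves Theorem~\ref{thm:obsMain}, that absence of the four explicit obstructions forces $G$-unirationality for the \emph{full} group $G$, degree by degree --- toric methods (where a Sylow-type reduction \emph{is} available, Corollary~3.6 of \cite{Dun13Finite}) for $d \ge 6$, the $\overline{\mathcal{M}}_{0,5}$ interpretation for $d=5$, a direct fixed-point analysis of the whole group for $d=4$, and for $d=3$ the index-two reduction of Theorem~\ref{thm:cubicIndex2} (which rests on Proposition~\ref{prop:cubicQuad}, quadratic descent for cubic hypersurfaces) combined with Lemmas~\ref{lem:3groupCubic} and~\ref{lem:S5unirational}. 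Two further, smaller problems with your write-up: your Smith-theory lemma is sound in spirit, but the exceptional pair $(d,p)=(7,5)$ is mishandled --- $\Aut(X)$ for $d=7$ contains a two-dimensional torus and hence has plenty of elements of order $5$; the correct argument is that any such $5$-group is cyclic (or lies in the torus) and so has a fixed point, e.g.\ by Proposition~\ref{prop:ratCyclic}. And for the genuinely hard case $p \mid d$ you defer entirely to ``the sections that follow,'' so even apart from the circularity the proposal supplies no substitute for the geometric content (Sections~\ref{sec:deg4} and~\ref{sec:deg3}) that constitutes the paper's proof.
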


\begin{remark}
For del Pezzo surfaces of degree $d \ge 3$, being $G$-unirational is
equivalent to being $G$-versal (see Proposition~\ref{prop:wv2Guni}).
We emphasize $G$-unirationality in this paper because it is a stronger
property and is more geometric.
\end{remark}

\begin{remark}
For $d \ge 6$, Theorem~\ref{thm:main} is obtained by revisiting the
classification of finite groups of essential dimension $2$
in~\cite{Dun13Finite}.  The case of $d = 5$ is implicit in the proof of
Theorem~6.5(c)~of~\cite{BuhRei97On-the-essential}.
Other results on the $G$-versality (and hence $G$-unirationality) of
these surfaces can be found in
\cite{Tok04Note,Tok052-dimensional,Tok06Two-dimensional,
Ban07Construction,Ban08Versal}.
\end{remark}

\begin{remark}
Condition (c) is closely related to the existence of a $0$-cycle of
degree $1$ in the arithmetic setting (see
Section~8~of~\cite{DunRei15Versality}).
One might ask if the existence of a $0$-cycle of degree $1$ implies
the existence of a rational point,
but this is false even for geometrically rational surfaces
(see~\cite{ColCor79Lequivalence}).
However,
for a del Pezzo surface $X$ of degree $d \ge 4$ over a perfect field $k$,
there is a rational $k$-point as soon as there is a rational $L$-point
for a finite extension $L/k$ of degree prime to $d$
(see~\cite{Cor77Points}).
Using this fact, one can prove the equivalence of conditions (a), (c), and (d)
in Theorem~\ref{thm:main} for $d \ge 4$, however one cannot conclude
anything about condition (b).

For cubic hypersurfaces (in particular, for del Pezzo surfaces of degree
$3$) it was conjectured by Cassels and Swinnerton-Dyer that
the existence of a $0$-cycle of degree coprime to $3$ implies
the existence of a rational point (see~\cite{Cor76Algebraic}).
Theorem~10.5~of~\cite{DunRei15Versality} shows that, for $d=3$,
the equivalence of conditions (a), (c), and (d) in
Theorem~\ref{thm:main} follows from this conjecture.
Consequently, Theorem~\ref{thm:main} can be viewed as evidence for the
conjecture.
\end{remark}

\begin{remark}
For degree $d=2$, a version of Theorem~\ref{thm:Manin} is known to
apply when the rational point lies outside of a certain closed subscheme
(see \cite{SalTesVar14On-the-uniratio,FesLui16Unirationality}).
However, the fixed points of $G$-actions often lie
on this subscheme
(see cases 2A and 2B of Theorem~1.1~of~\cite{DolDun14Fixed}).
Theorem~\ref{thm:main} does not hold for $d=2$, as the following example
shows.
\end{remark}

\begin{example}
Consider the del Pezzo surface $X$ of degree $2$ given by
\[
x_4^2 = x_1^3x_2 + x_2^3x_3 + x_3^3x_1
\]
in the weighted projective space $\bbP(1:1:1:2)$,
which has automorphism group $C_2 \times \PSL_2(\bbF_7)$
(see Table~8.9~of~\cite{Dol12Classical}).
In particular, $X$ has a faithful action of a finite group
$G \simeq C_2 \times (C_7 \rtimes C_3)$.
One checks that every abelian subgroup $A$ of $G$ has a fixed point on $X$,
so condition (b) from Theorem~\ref{thm:main} holds.

However, the essential dimension of $G$ is greater than $2$
(see Definition~\ref{def:ed}).
This follows by Lemma~7.2~and~Theorem~3.1~of~\cite{KraLotSch09Compression}.
We conclude that $X$ is not $G$-unirational and
that Theorem~\ref{thm:main} fails for $d=2$.
\end{example}

\begin{remark}
For degree $d=1$, there is always a canonical point on $X$.
In particular, if Theorem~\ref{thm:Manin} were true in this case
then every del Pezzo $G$-surface of degree $1$ would be $G$-unirational
and Theorem~\ref{thm:main} would be true in this case as well.
However, Theorem~\ref{thm:Manin} is completely open for minimal surfaces
of degree $1$.
\end{remark}

In Section~\ref{sec:prelims}, we recall the structure theory of del
Pezzo surfaces and some useful facts about equivariant unirationality.
In Section~\ref{sec:Hermite}, we prove that
$G$-unirationality of a cubic surface reduces to consideration of
$H$-unirationality where $H$ is a subgroup of $G$ of index $2$.
We then show how this can be used to reprove Theorem~\ref{thm:Hermite}.
In Section~\ref{sec:obstructions}, we identify four families of actions
by elementary abelian groups on del Pezzo surfaces which do not have
fixed points.
These will turn out to be the only obstructions to equivariant unirationality
for degree $\ge 3$ (see Theorem~\ref{thm:obsMain}).
In Sections~\ref{sec:deg5to9}--\ref{sec:deg3}, we prove
Theorem~\ref{thm:main} by proving Theorem~\ref{thm:obsMain}.

\section{Preliminaries}
\label{sec:prelims}

Throughout, a \emph{$k$-variety} is a geometrically integral scheme of
finite type over a field $k$ of characteristic $0$.
A $k$-variety $X$ is $k$-unirational if there exists a dominant rational map
$\bbA^n_k \dasharrow X$ defined over $k$.
A \emph{variety}, \emph{surface}, or \emph{curve}, without explicit
reference to a base field, will have base field $k=\bbC$.

\subsection{Del Pezzo surfaces}

A del Pezzo surface $X$ is a smooth projective surface whose
anticanonical class $-K_X$ is ample.
The degree $d=K_X^2$ of a del Pezzo surface is an integer $1 \le d \le 9$.
Except for $\bbP^1 \times \bbP^1$ in degree $8$, every del Pezzo surface
is isomorphic to $\bbP^2$ blown up at $9-d$ points.

For degree $d \le 5$, the automorphism group $\Aut(X)$ of $X$ induces
a faithful action on $\Pic(X)$.
In fact, there is an injective homomorphism
\begin{equation} \label{eq:Weyl}
\Aut(X) \hookrightarrow W(E_n)
\end{equation}
where $W(E_n)$ is the Weyl group of a simple root system of type $E_n$
(by convention, $E_5 = D_5$, $E_4 = A_4$).

Let $X$ be a smooth projective surface with a faithful action of a
finite group $G$.
We say that $X$ is a \emph{minimal} $G$-surface if any equivariant birational
morphism $X \to X'$ is an isomorphism, where $X'$ is another $G$-surface.
By blowing down $G$-stable sets of skew $(-1)$-curves, every
rational $G$-surface is equivariantly birationally equivalent to
a minimal rational $G$-surface.
By \cite{Man67Rational} and \cite{Isk79Minimal},
all minimal rational $G$-surfaces are either
\begin{itemize}
\item del Pezzo $G$-surfaces with $\Pic(X)^G \simeq \bbZ$, or
\item conic bundle $G$-surfaces, where there is an equivariant
morphism to $\bbP^1$ with rational general fiber and $\Pic(X)^G \simeq
\bbZ^2$.
\end{itemize}

\subsection{Equivariant Unirationality}

Let $X$ be a variety with an action of a finite group $G$.
The variety $X$ is \emph{$G$-weakly versal} if,
for every \emph{faithful} $G$-variety $Y$, there exists an
equivariant rational map $Y \dasharrow X$.
The variety $X$ is \emph{$G$-versal} if, for every non-empty
$G$-invariant open subset $U$ of $X$, the variety $U$ is $G$-weakly
versal.

The primordial example of a $G$-versal variety is a linear
representation of $G$.
Thus, for $X$ to be $G$-weakly versal, it suffices to find just one
equivariant rational map $V \dasharrow X$ where $V$ is a faithful linear
representation.
In particular, $X$ is $G$-weakly versal as soon as it has a $G$-fixed
point.

Following \cite{DunRei15Versality},
we have the following series of implications:
\[
\textrm{$G$-unirational}
\implies
\textrm{$G$-versal}
\implies
\textrm{$G$-weakly versal}
\]
which correspond to the implications:
\[
\textrm{$k$-unirational}
\implies
\textrm{$X(k)$ Zariski-dense in $X$}
\implies
\textrm{$X(k) \ne \emptyset$}
\]
in the arithmetic setting.

In our context, these three things are all equivalent:

\begin{prop} \label{prop:wv2Guni}
Let $X$ be a del Pezzo surface of degree $\ge 3$ with an action of
a finite group $G$.  The following are equivalent:
\begin{enumerate}
\item $X$ is $G$-unirational,
\item $X$ is $G$-versal,
\item $X$ is $G$-weakly versal.
\end{enumerate}
\end{prop}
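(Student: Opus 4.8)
The plan is to reduce the statement to the arithmetic equivalence recorded in Theorem~\ref{thm:Manin}, by way of the dictionary between equivariant and arithmetic (uni)rationality developed in \cite{DunRei15Versality}. The implications $(1)\Rightarrow(2)\Rightarrow(3)$ are already available to us — they are the chain of implications displayed above, following \cite{DunRei15Versality} — so the entire content is the single implication $(3)\Rightarrow(1)$, after which the intermediate condition $(2)$ will be squeezed automatically.

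First I would fix the data of a versal torsor. Let $V$ be a faithful linear representation of $G$, let $U \subseteq V$ be the dense $G$-invariant open subset on which $G$ acts freely, and set $K = \bbC(V)^G = \bbC(U/G)$, a field of characteristic $0$. The quotient $U \to U/G$ is a versal $G$-torsor, and twisting $X$ by it produces the generic fibre $\calX$ of $(X \times U)/G \to U/G$, a smooth projective $K$-surface with $\calX \times_K \overline{K} \cong X_{\overline K}$. Since ``del Pezzo of degree $d$'' is a geometric condition (ampleness of the anticanonical class together with $K_X^2 = d$), the twist $\calX$ is again a del Pezzo surface of degree $d \ge 3$, now over $K$. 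The dictionary of \cite{DunRei15Versality} translates the three conditions of the Proposition into the three arithmetic conditions over $K$ displayed in the text: $X$ is $G$-weakly versal iff $\calX(K) \ne \emptyset$, $X$ is $G$-versal iff $\calX(K)$ is Zariski dense, and $X$ is $G$-unirational iff $\calX$ is $K$-unirational. The first of these is exactly the observation recorded above, that weak versality is detected by a single equivariant rational map $V \dasharrow X$, which is the same datum as a rational section of $(X \times U)/G \to U/G$, i.e.\ a $K$-point of $\calX$.

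With this in place the argument is immediate. Assuming $(3)$, we obtain $\calX(K) \ne \emptyset$. Since $\calX$ is a del Pezzo surface of degree $d \ge 3$ over the characteristic-$0$ field $K$, Theorem~\ref{thm:Manin} shows that $\calX$ is $K$-unirational, and translating back along the dictionary yields $(1)$. The remaining condition is then sandwiched: over the infinite field $K$ a $K$-unirational surface has Zariski-dense $K$-points, which are in particular nonempty, so ``$\calX(K)$ dense'' is equivalent to the other two arithmetic statements; hence $(2)$ is equivalent to $(1)$ and $(3)$ as well.

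I expect the only real subtlety to lie in the dictionary rather than in the geometry: one must check that testing weak versality on the single versal torsor $U \to U/G$ suffices (so that $\calX(K) \ne \emptyset$ genuinely captures condition $(3)$), and that a $K$-unirational parametrization $\bbA^n_K \dasharrow \calX$ transports back to a \emph{dominant} equivariant map out of a faithful representation. Concretely, spreading $\bbA^n_K$ out over $U/G$ as $\bbA^n \times U$ with $G$ acting only through $U$ exhibits the source as the faithful representation $V \oplus \mathbf{1}^{\oplus n}$, so the transported map is dominant and equivariant as required. Both points are supplied by \cite{DunRei15Versality}, and the del Pezzo geometry enters only through Manin's Theorem~\ref{thm:Manin}.
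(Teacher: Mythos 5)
Your proposal is correct and takes essentially the same route as the paper: both reduce the statement to Theorem~\ref{thm:Manin} via the equivariant--arithmetic dictionary of \cite{DunRei15Versality}, together with the observation that twisted forms of a del Pezzo surface of degree $d$ are again del Pezzo surfaces of degree $d$. The only difference is one of exposition --- where the paper simply cites Theorem~1.1 of \cite{DunRei15Versality}, you unpack its content by twisting along the generic torsor $U \to U/G$ and spreading out the unirational parametrization, which is precisely the mechanism behind that cited theorem.
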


\begin{proof}
We note that the property of being a del Pezzo surface of a given degree
is a geometric property.  In other words, all twisted forms of a given
del Pezzo surface of degree $d$ are also del Pezzo surfaces of degree
$d$.
The proposition now follows by using Theorem~\ref{thm:Manin}
in combination with Theorem~1.1~of~\cite{DunRei15Versality}.
\end{proof}

A very useful corollary of this proposition is the following:

\begin{cor} \label{cor:fix2Guni}
Let $X$ be a del Pezzo surface of degree $\ge 3$ with an action of
a finite group $G$.
If $X$ has a $G$-fixed point, then $X$ is $G$-unirational.
\end{cor}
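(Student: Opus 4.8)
The plan is to reduce immediately to the equivalence established in Proposition~\ref{prop:wv2Guni}. The only thing that needs to be checked by hand is that the existence of a $G$-fixed point forces $X$ to be $G$-weakly versal; once that is in place, the chain of equivalences in Proposition~\ref{prop:wv2Guni} upgrades $G$-weak versality all the way to $G$-unirationality with no further argument. So the proof will be the composite implication ``$G$-fixed point $\Rightarrow$ $G$-weakly versal $\Rightarrow$ $G$-unirational,'' where the first arrow is elementary and the second is the proposition.

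To verify $G$-weak versality directly, I would let $x_0 \in X$ be a $G$-fixed point and let $Y$ be an arbitrary faithful $G$-variety. I then exhibit the constant map $\phi \colon Y \to X$ sending every point to $x_0$. This is a morphism, hence in particular a rational map, and it is $G$-equivariant because $g \cdot \phi(y) = g \cdot x_0 = x_0 = \phi(g \cdot y)$ for all $g \in G$ and all $y \in Y$, using precisely that $x_0$ is fixed. Since $Y$ was an arbitrary faithful $G$-variety, this shows $X$ is $G$-weakly versal, which is exactly the informal observation recorded just before the statement (``$X$ is $G$-weakly versal as soon as it has a $G$-fixed point'').

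There is essentially no obstacle at this stage: all of the substantive content has been absorbed into Proposition~\ref{prop:wv2Guni}, which in turn rests on Manin's theorem (Theorem~\ref{thm:Manin}) and the transfer machinery of~\cite{DunRei15Versality}, and that proposition is already available to us. Thus the corollary is genuinely a formal consequence, with the constant map supplying the one elementary input and the proposition doing all the real work.
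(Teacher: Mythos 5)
Your proof is correct and matches the paper's (implicit) argument exactly: the paper records the observation that a $G$-fixed point gives $G$-weak versality (via precisely the constant equivariant map you describe) and then derives the corollary formally from Proposition~\ref{prop:wv2Guni}. Nothing further is needed.
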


The following well-known fact is a consequence of the holomorphic
Lefschetz fixed-point formula (see \S 3.5.1 of \cite{Ser08Le-groupe}).

\begin{prop} \label{prop:ratCyclic}
If $X$ is a complete smooth rational surface with an action of a
finite cyclic group $G$, then $X$ has a $G$-fixed point.
\end{prop}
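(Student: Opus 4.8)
The plan is to reduce to a single automorphism and then apply the holomorphic Lefschetz fixed-point formula to the structure sheaf. Since $G$ is cyclic, I would fix a generator $g$; then $X^G = X^g$, so it suffices to produce a point fixed by $g$. The object to study is the holomorphic Lefschetz number
\[
L(g) = \sum_{q \ge 0} (-1)^q \operatorname{tr}\bigl(g^* \mid H^q(X, \mathcal{O}_X)\bigr).
\]

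First I would compute the left-hand side using rationality. Because $X$ is a smooth projective rational surface over $\bbC$, its Hodge numbers satisfy $h^{0,1} = h^{0,2} = 0$, so $H^q(X, \mathcal{O}_X) = 0$ for $q > 0$, while $H^0(X, \mathcal{O}_X) = \bbC$ since $X$ is connected and proper. As $g$ acts trivially on the constants, every term with $q > 0$ vanishes and the $q = 0$ term contributes $1$; hence $L(g) = 1$, and in particular $L(g) \neq 0$. (It is worth noting that the \emph{topological} Lefschetz number would not suffice here, since the trace of $g^*$ on $H^2(X,\bbQ)$ can cancel the contributions of $H^0$ and $H^4$; the holomorphic version is what pins the value to $1$.)

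Next I would invoke the holomorphic Lefschetz fixed-point formula, in the form valid for a finite-order holomorphic automorphism and allowing non-isolated fixed loci, which expresses $L(g)$ as a sum of local contributions indexed by the connected components of the fixed-point set $X^g$. The crucial point---and the only feature of the formula I actually use---is that this sum is empty, hence equal to $0$, whenever $X^g = \emptyset$. Combining this with $L(g) = 1$ forces $X^g \neq \emptyset$, and therefore $X^G = X^g$ is nonempty.

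The main obstacle is purely expository: a finite-order automorphism of a surface need not have isolated fixed points (it may fix an entire curve), so the naive Atiyah--Bott formula for nondegenerate fixed points does not apply verbatim. I would circumvent this by appealing to the version of the formula for finite-order automorphisms with smooth, possibly positive-dimensional, fixed loci, as recorded in \S 3.5.1 of \cite{Ser08Le-groupe}; since I only need the vanishing of the empty sum, the precise shape of the individual local terms is irrelevant to the argument.
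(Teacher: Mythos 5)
Your proof is correct and is exactly the argument the paper has in mind: the paper proves this proposition by simply citing the holomorphic Lefschetz fixed-point formula from \S 3.5.1 of \cite{Ser08Le-groupe}, and your write-up (reduce to a generator $g$, compute $L(g)=1$ from $h^{0,1}=h^{0,2}=0$ for a rational surface, and note that an empty fixed locus would force $L(g)=0$) fills in precisely that intended reasoning, including the correct caveat about non-isolated fixed loci.
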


\begin{cor} \label{cor:cyc2Guni}
Let $X$ be a del Pezzo surface of degree $\ge 3$ with an action of
a finite cyclic $G$.
Then $X$ is $G$-unirational.
\end{cor}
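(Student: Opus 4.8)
The plan is to chain together the two results immediately preceding the statement. First I would observe that every del Pezzo surface $X$ is a smooth projective rational surface: smoothness and projectivity are built into the definition of a del Pezzo surface, and rationality is immediate from the structure theory recalled above, since any del Pezzo surface is either $\bbP^1 \times \bbP^1$ or $\bbP^2$ blown up at finitely many points, all of which are rational. In particular $X$ is complete, so it meets every hypothesis of Proposition~\ref{prop:ratCyclic}.

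Next, since $G$ is assumed to be finite cyclic, I would simply invoke Proposition~\ref{prop:ratCyclic} with this $X$ to conclude that the cyclic action of $G$ on the complete smooth rational surface $X$ has a $G$-fixed point. This is the one place where the cyclicity of $G$ is used, and it is exactly the input that the Lefschetz-type argument behind Proposition~\ref{prop:ratCyclic} is designed to supply.

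Finally, I would apply Corollary~\ref{cor:fix2Guni}, whose hypotheses are precisely that $X$ is a del Pezzo surface of degree $\ge 3$ with a $G$-action possessing a $G$-fixed point. Since we are given degree $\ge 3$ and have just produced the fixed point, the corollary yields that $X$ is $G$-unirational, completing the argument.

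Honestly, there is no genuine obstacle here: all of the substantive work is already packaged into Proposition~\ref{prop:ratCyclic} and Corollary~\ref{cor:fix2Guni} (which in turn rests on Proposition~\ref{prop:wv2Guni} and Manin's theorem). The only point requiring any verification is that a del Pezzo surface qualifies as a ``complete smooth rational surface,'' and this is immediate. Thus I expect the proof to be a one-line deduction combining the two previous results, with the degree hypothesis $d \ge 3$ entering only through Corollary~\ref{cor:fix2Guni}.
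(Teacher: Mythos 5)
Your proof is correct and is exactly the argument the paper intends: Proposition~\ref{prop:ratCyclic} gives a $G$-fixed point on the complete smooth rational surface $X$, and Corollary~\ref{cor:fix2Guni} then yields $G$-unirationality. The paper leaves this deduction implicit, and your write-up fills it in with the same two ingredients in the same order.
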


The following proposition is a useful method for producing fixed points
(Proposition~A.2~of~\cite{ReiYou00Essential}):

\begin{prop}[Going-Down] \label{prop:GoingDown}
Suppose $X \dasharrow Y$ is a $G$-equivariant rational map
of $G$-varieties where $G$ is abelian.
If $X$ has a smooth $G$-fixed point and $Y$ is proper,
then $Y$ has a $G$-fixed point.
\end{prop}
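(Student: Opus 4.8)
The plan is to reduce the statement to a clean assertion about smooth $G$-varieties and then induct on dimension, using iterated blow-ups at the fixed point. Concretely, I would prove by induction on $n = \dim W$ the following slightly more flexible claim: if $G$ is finite abelian, $\psi\colon W \dasharrow Y$ is a $G$-equivariant rational map from a smooth irreducible $G$-variety $W$ carrying a $G$-fixed point $w_0$ to a proper $G$-variety $Y$, then $Y$ has a $G$-fixed point. The proposition follows by restricting $\phi$ to the smooth locus of $X$, which is $G$-stable, open, irreducible, and contains $x_0$; so we may take $W$ smooth with smooth $G$-fixed point $w_0 = x_0$.

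The inductive step splits into two cases. If $\psi$ is defined at $w_0$, then since $w_0$ is $G$-fixed and $\psi$ is equivariant we get $g\cdot\psi(w_0)=\psi(g\cdot w_0)=\psi(w_0)$ for all $g\in G$, so $\psi(w_0)$ is the desired fixed point (this also disposes of the base case $n=0$, where $\psi$ is automatically a morphism). Otherwise I would blow up $W$ at $w_0$. Writing $\pi\colon \widetilde W = \mathrm{Bl}_{w_0}W \to W$, the exceptional divisor is the projective space $E = \bbP(T_{w_0}W)$, on which $G$ acts through its linear action on the tangent space $T_{w_0}W$. Since $\widetilde W$ is again smooth and $\pi$ is birational, $\widetilde\psi = \psi\circ\pi$ is a $G$-equivariant rational map $\widetilde W \dasharrow Y$.

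Two facts then drive the induction. First, because $G$ is abelian, the commuting finite-order operators on $T_{w_0}W$ are simultaneously diagonalizable and hence share a common eigenvector, whose class is a (necessarily smooth) $G$-fixed point of $E$. This is the only place the abelian hypothesis enters, and it is what allows the inductive hypothesis to apply to $E$. Second, and more delicately, the restriction $\widetilde\psi|_E$ must be a genuine rational map, i.e. $E$ is not contained in the indeterminacy locus of $\widetilde\psi$. This holds because $\widetilde W$ is smooth and $Y$ is proper: the indeterminacy locus of a rational map from a smooth variety to a proper variety has codimension at least $2$ (at each codimension-one point the local ring is a discrete valuation ring, and the valuative criterion of properness extends the map there), whereas $E$ is a prime divisor. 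As $E$ is irreducible, $\widetilde\psi|_E\colon E \dasharrow Y$ is a $G$-equivariant rational map from a smooth variety of dimension $n-1$ carrying a smooth $G$-fixed point, so the inductive hypothesis produces a $G$-fixed point of $Y$.

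The main obstacle is precisely the codimension-two control of the indeterminacy locus in the second fact: without it the exceptional divisor could be swallowed entirely by the locus where $\widetilde\psi$ is undefined, and the induction would stall. Everything else—equivariance of the blow-up construction, smoothness of each $\widetilde W$ and of $E$, and termination as $\dim$ strictly decreases—is routine. Properness of $Y$ is thus used in two guises: to extend across codimension-one points, which secures the codimension-two bound, and to keep the restriction to the exceptional divisor meaningful at every stage of the recursion.
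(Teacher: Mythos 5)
Your proof is correct. One caveat on the comparison: the paper does not prove this proposition at all---it is quoted as Proposition~A.2 of \cite{ReiYou00Essential} (whose appendix, by Koll\'ar and Szab\'o, supplies the proof)---and your argument, induction on dimension via equivariant blow-up at the fixed point, a common eigenvector giving a $G$-fixed point on $E=\bbP(T_{w_0}W)$, and the codimension~$\geq 2$ bound on the indeterminacy locus of a rational map from a smooth variety to a proper variety, is essentially the argument of that reference. You have also placed the two hypotheses exactly where they are needed: abelianness (together with the base field $\bbC$, i.e.\ characteristic $0$) enters only through simultaneous diagonalization on the tangent space, and properness of $Y$ enters only through the valuative-criterion bound that keeps the exceptional divisor out of the indeterminacy locus, so the restriction to $E$ is a genuine rational map and the induction can proceed.
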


In particular, Proposition~\ref{prop:GoingDown} implies that
the existence of a $G$-fixed point is an equivariant
birational invariant of smooth, proper $G$-varieties.
Since a linear representation always has a smooth fixed point, we have
the following:

\begin{cor} \label{cor:abelSubgroups}
If $X$ is a proper $G$-unirational variety and $G$ is abelian,
then $X$ has a $G$-fixed point.
\end{cor}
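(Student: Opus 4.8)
The plan is to apply the Going-Down proposition (Proposition~\ref{prop:GoingDown}) directly, using the faithful linear representation supplied by $G$-unirationality as the source $G$-variety. First I would unwind the definition: since $X$ is $G$-unirational, there is a faithful linear representation $V$ of $G$ together with a dominant $G$-equivariant rational map $V \dasharrow X$. (In fact dominance is not needed here; a single equivariant rational map from $V$ to $X$ already suffices for what follows.)

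The crucial observation is that the source $V$ possesses a smooth $G$-fixed point. Indeed, the $G$-action on $V$ is linear, hence fixes the origin $0 \in V$, and $V$ is smooth at every point because it is an affine space. Combining this with the hypotheses that $X$ is proper and $G$ is abelian, Proposition~\ref{prop:GoingDown} applies verbatim to the map $V \dasharrow X$ (with $V$ in the role of the source having a smooth fixed point, and $X$ in the role of the proper target), and it immediately yields a $G$-fixed point on $X$.

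I do not expect any genuine obstacle in this argument. The entire analytic content is already encapsulated in the Going-Down proposition, and the only point requiring verification is the existence of a smooth $G$-fixed point on the source, which is automatic for a linear representation. In effect, the corollary is just the specialization of the remark preceding the statement---that possession of a $G$-fixed point is an equivariant birational invariant of smooth proper $G$-varieties when $G$ is abelian---to the universal source $V$.
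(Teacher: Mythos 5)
Your proof is correct and is exactly the paper's argument: the text preceding the corollary applies Proposition~\ref{prop:GoingDown} to a $G$-equivariant rational map $V \dasharrow X$ from a faithful linear representation, using the origin of $V$ as the smooth $G$-fixed point. Your added remark that dominance is not actually needed is also accurate, since Going-Down only requires an equivariant rational map to a proper target.
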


One might wonder how much the definition of $G$-unirationality depends
on the particular choice of linear representation $V$.
In general, a $G$-unirational variety might have larger dimension than
some faithful linear representation, so one cannot simply take any
representation.
However, the following consequence of the No-Name Lemma
(see~\cite{Dom08Covariants}) shows that this is not a significant
problem.

\begin{prop} \label{prop:NoName}
If $X$ is $G$-unirational, then for any faithful linear representation
$V$ of $G$, there exists a $G$-equivariant dominant rational map
$V \times \bbA^n \dasharrow X$ for some affine space $\bbA^n$ where
$G$ acts trivially on $\bbA^n$.
\end{prop}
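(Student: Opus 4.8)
The plan is to unwind the hypothesis and then trade the representation used to witness $G$-unirationality for the given one $V$, at the cost of an extra factor with trivial action. By definition, $G$-unirationality supplies \emph{some} faithful linear representation $W$ together with a dominant rational $G$-equivariant map $f \colon W \dasharrow X$. The target is a dominant rational $G$-equivariant map $V \times \bbA^n \dasharrow X$, and the bridge between the two will be the product $V \times W$ equipped with the diagonal $G$-action, to which the No-Name Lemma applies.

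First I would build a dominant rational $G$-map out of $V \times W$. The projection $\pr \colon V \times W \to W$ is $G$-equivariant and dominant, so the composite $f \circ \pr \colon V \times W \dasharrow X$ is again $G$-equivariant and dominant. This realizes $X$ as an equivariant image of $V \times W$; it then remains to replace $W$ by an affine space carrying the trivial action.

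Next I would apply the No-Name Lemma, taking $V$ as the base. Because $V$ is faithful, the locus of points with nontrivial stabilizer is contained in $\bigcup_{g \ne 1} V^g$, a finite union of proper linear subspaces, so $G$ acts generically freely on $V$. Viewing $V \times W$ as the trivial $G$-equivariant vector bundle over $V$ with fiber the representation $W$, the No-Name Lemma yields a $G$-equivariant birational map $V \times \bbA^n \biratarrow V \times W$ with $n = \dim W$ and $G$ acting trivially on $\bbA^n$. Composing with $f \circ \pr$ then produces the desired dominant rational $G$-equivariant map $V \times \bbA^n \dasharrow X$.

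The main point to verify carefully is the hypothesis of the No-Name Lemma: one must use $V$ (not $W$) as the base so that the trivialized factor is the copy of $W$, and one must check that a faithful linear action of a finite group is generically free, which is exactly the computation with the subspaces $V^g$ above. Dominance of the final composite is then automatic, since a birational map followed by a dominant map is dominant.
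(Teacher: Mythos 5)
Your proof is correct and follows exactly the route the paper intends: the paper states this proposition as a direct consequence of the No-Name Lemma (citing Domokos) without writing out the details, and your argument---taking the witness representation $W$, forming $V \times W$ with the diagonal action, projecting, and applying the No-Name Lemma over the generically free base $V$ to trivialize the $W$-factor---is the standard way to fill in that citation. The verification that a faithful linear representation of a finite group is generically free is the right point to check, and you handle it correctly.
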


Finally, it is useful to point out the following definition:

\begin{defn} \label{def:ed}
The \emph{essential dimension} of a finite group $G$,
denoted $\ed(G)$,
is the minimal dimension of a faithful $G$-unirational variety.
\end{defn}

The finite groups of essential dimension $2$ were classified
in Theorem 1.1~of~\cite{Dun13Finite}.
Thus, we know the groups $G$ for which a $G$-unirational surface exists,
but we do now know whether a \emph{given} $G$-surface is
$G$-unirational.

\section{A construction for cubic surfaces}
\label{sec:Hermite}

The following is a well-known fact about arithmetic cubic hypersurfaces
(see, for example, Proposition~2.2~of~\cite{Cor76Algebraic}).

\begin{prop} \label{prop:cubicQuad}
Let $X$ be a cubic hypersurface over a field $k$.
If $X$ has a $K$-point for a quadratic extension field $K/k$,
then $X$ has a $k$-point.
\end{prop}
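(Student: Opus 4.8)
The plan is to use the classical \emph{secant line} construction for cubics: produce a $k$-rational line meeting $X$, two of whose three intersection points are the given conjugate pair, and conclude that the residual third point is automatically $k$-rational. First I would dispose of the trivial case: a $K$-point $P$ whose image closed point has residue field $k$ is already a $k$-point, so we may assume $K/k$ is genuinely of degree $2$ and that $P$ corresponds to a closed point $Z \subset \bbP^n$ with residue field $K$, hence an effective $0$-cycle of degree $2$ defined over $k$. Over $\bar{k}$, this $Z$ is either a Galois-conjugate pair of distinct points (when $K/k$ is separable) or a single point equipped with a tangent direction (the inseparable case in characteristic $2$). In every case $Z$ has length $2$ and is not supported at a single $k$-point, so its linear span $L = \langle Z \rangle$ is a \emph{line defined over $k$}, since it is the intersection of the $k$-rational hyperplanes containing $Z$.

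Next I would restrict the defining cubic form $F$ of $X$ to $L \simeq \bbP^1_k$, obtaining a binary cubic $f = F|_L$ with coefficients in $k$. If $f \equiv 0$, then $L \subset X$ and any $k$-point of $\bbP^1_k$ already lies on $X$, so we are done. Otherwise $\operatorname{div}_L(f)$ is an effective $k$-rational divisor of degree $3$ on $\bbP^1_k$. Because $Z \subset X \cap L$, the form $f$ vanishes on $Z$, giving the inequality $Z \le \operatorname{div}_L(f)$ of effective divisors. The residual divisor $\operatorname{div}_L(f) - Z$ is then effective of degree $3 - 2 = 1$ and is still defined over $k$; an effective $k$-rational divisor of degree $1$ on $\bbP^1_k$ is exactly a single $k$-rational point, which lies on $\operatorname{div}_L(f) \subset X \cap L \subset X$. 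This produces the desired $k$-point of $X$.

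The conceptual heart of the argument is simply that ``two of the three roots of a $k$-rational binary cubic being a conjugate pair forces the third root to be $k$-rational,'' and the main thing to be careful about is making this uniform. The obstacle I anticipate is the inseparable situation in characteristic $2$, where there is no literal Galois conjugate $\sigma(P)$ to span a visible secant line; I would circumvent this entirely by working with the degree-$2$ subscheme $Z$ and its linear span rather than with honest conjugate points, so that the span and the residual-divisor bookkeeping go through scheme-theoretically in all characteristics. One should also confirm the degenerate possibility $L \subset X$ is handled (as above) and that the span of a length-$2$ subscheme is genuinely a line; both are routine once phrased via effective $k$-divisors.
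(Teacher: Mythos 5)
Your proof is correct, including the careful scheme-theoretic handling of the inseparable case in characteristic $2$ (which is not even needed here, since the paper works over fields of characteristic $0$). Note that the paper does not actually prove this proposition: it records it as a well-known fact and cites Proposition~2.2 of Coray's \emph{Algebraic points on cubic hypersurfaces}. Your secant-line argument --- span the degree-$2$ subscheme $Z$ by a $k$-rational line $L$, restrict the cubic form to $L \simeq \bbP^1_k$, and observe that either $L \subset X$ or the residual divisor $\operatorname{div}_L(f) - Z$ is an effective $k$-rational divisor of degree $1$ --- is precisely the classical argument underlying that reference, so in effect you have supplied the proof the paper outsources.
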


By Theorem~\ref{thm:Manin}, $k$-unirationality of a cubic surface (a del
Pezzo surface of degree $3$) is equivalent to the existence of a smooth
rational $k$-point, so we have the following equivariant analog:

\begin{thm} \label{thm:cubicIndex2}
Let $X$ be a smooth cubic surface with a faithful action of a finite
group $G$.  Suppose $H$ is a subgroup of $G$ of index $2$.
If $X$ is $H$-unirational then $X$ is $G$-unirational.
\end{thm}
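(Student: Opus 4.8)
The plan is to carry out, at the level of equivariant rational maps, the same ``residual point on a chord'' construction that proves Proposition~\ref{prop:cubicQuad} in the arithmetic setting, with the index-$2$ subgroup $H$ playing the role of the Galois group of a quadratic extension $L/k$ and $H$-unirationality the role of the existence of an $L$-point. By Proposition~\ref{prop:wv2Guni}, $X$ is $G$-unirational as soon as it is $G$-weakly versal, and for the latter it suffices to exhibit a \emph{single} $G$-equivariant rational map $V \dasharrow X$ from a faithful linear representation $V$. In particular, the map I produce need not be dominant, only well defined and $G$-equivariant. Note that $H$ is normal in $G$, having index $2$; fix once and for all an element $\sigma \in G \setminus H$, so that $G = H \sqcup H\sigma$ and $\sigma^2 \in H$.

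First I would fix a faithful $G$-representation $U$ and, applying the No-Name Lemma (Proposition~\ref{prop:NoName}) to the $H$-action on $\operatorname{Res}^G_H U$, produce an $H$-equivariant dominant rational map $f \colon V \dasharrow X$, where $V = U \oplus \bbA^n$ with $G$ acting trivially on $\bbA^n$; this $V$ is again a faithful $G$-representation. Next I would form the conjugate map $f^\sigma(v) := \sigma \cdot f(\sigma^{-1} v)$. A short computation using the normality of $H$ shows that $f^\sigma$ is again $H$-equivariant: for $h \in H$ one rewrites $\sigma^{-1} h v = (\sigma^{-1} h \sigma)(\sigma^{-1} v)$ and uses $\sigma^{-1} h \sigma \in H$. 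If $f = f^\sigma$ as rational maps, then $f$ is already $G$-equivariant and we are done; otherwise $f(v) \neq f^\sigma(v)$ for $v$ in a dense open set.

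The heart of the argument is the residual-point construction. Embed $X$ anticanonically as a cubic surface in $\bbP^3$, so that $G \hookrightarrow \Aut(X)$ acts by projective-linear transformations preserving $X$. For generic $v$ the points $f(v)$ and $f^\sigma(v)$ are distinct, and since $f$ is dominant a generic value of $f$ avoids the finitely many lines lying on $X$; hence the chord joining $f(v)$ and $f^\sigma(v)$ is an honest secant and meets $X$ in a well-defined third point $g(v)$, giving a rational map $g \colon V \dasharrow X$. This residual-point operation is symmetric in its two inputs and is equivariant for projective-linear automorphisms preserving $X$. Equivariance alone shows that $g$ is $H$-equivariant. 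For $\sigma$-equivariance I would use, in addition, the symmetry of the operation together with the identities $\sigma f^\sigma(v) = \sigma^2 f(\sigma^{-1}v) = f(\sigma v)$ (valid since $\sigma^2 \in H$) and $f^\sigma(\sigma v) = \sigma f(v)$: these show that the unordered pairs $\{f(\sigma v), f^\sigma(\sigma v)\}$ and $\{\sigma f(v), \sigma f^\sigma(v)\}$ coincide, so $g(\sigma v) = \sigma g(v)$. As $G = \langle H, \sigma \rangle$, the map $g$ is $G$-equivariant, and Proposition~\ref{prop:wv2Guni} then yields the conclusion.

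The step I expect to require the most care is the well-definedness of $g$ as a genuine rational map, together with the claim that the third-point operation is $G$-equivariant. The latter rests on the fact that the $G$-action on the anticanonical $\bbP^3$ is linear, so that $G$ permutes chords and their intersections with $X$; the former uses the dominance of $f$ to guarantee that the generic chord is an honest secant rather than one of the $27$ lines on $X$. The upgrade from $H$-equivariance to full $G$-equivariance is exactly what the symmetry of the residual-point operation, combined with $\sigma^2 \in H$, provides --- this is the equivariant shadow of the descent used in Proposition~\ref{prop:cubicQuad}.
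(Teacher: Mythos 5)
Your proof is correct, but it takes a genuinely different route from the paper's own proof of this theorem. The paper's proof is arithmetic: it invokes Theorem~1.1 of \cite{DunRei15Versality} to reduce $G$-unirationality to $K$-unirationality of every twisted variety ${}^T X$, uses the exact sequence $H^1(K,H) \to H^1(K,G) \to H^1(K,C_2)$ to descend the torsor $T$ to an $H$-torsor over an extension $L/K$ of degree at most $2$, obtains an $L$-point of ${}^T X$ from the hypothesis of $H$-unirationality, and then quotes Coray's result (Proposition~\ref{prop:cubicQuad}) to get a $K$-point, concluding via Theorem~\ref{thm:Manin}. You instead geometrize the quadratic descent: your chord construction is in effect the proof of Proposition~\ref{prop:cubicQuad} carried out equivariantly, and it is essentially the construction the paper uses in its alternative ``elementary'' proof of Theorem~\ref{thm:Hermite} --- the twisted $G$-action on $X \times X$ there encodes your unordered pairs $\{f(v), f^\sigma(v)\}$, and the third intersection map $\omega$ is your residual-point map. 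The new ingredient that makes this work for arbitrary $G$ and $H$, where the paper's Hermite argument needed $G = S_5$, is your reduction to weak versality via Proposition~\ref{prop:wv2Guni}: since a single $G$-equivariant rational map from a faithful representation suffices, you never need $\omega \circ \tau$ to be dominant, and dominance is precisely the step the paper could only establish using special properties of $S_5$ (no fixed points, no faithful action on $\bbP^1$). The trade-off is that your argument stays on the surface and avoids explicit Galois cohomology, but it is not more elementary in substance, since Proposition~\ref{prop:wv2Guni} itself rests on \cite{DunRei15Versality} and Manin's theorem; conversely, the paper's cohomological proof transfers immediately to cubic hypersurfaces of higher dimension, as the paper remarks after the theorem.
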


\begin{proof}
The following proof was suggested to the author by Z.~Reichstein.
We use the machinery of \cite{DunRei15Versality}.
Let $T \to \Spec(K)$ be a $G$-torsor for some extension field $K/\bbC$.
By Theorem~1.1~of~\cite{DunRei15Versality},
we want to show that
the twisted variety ${}^T X$ is $K$-unirational.
By Theorem~\ref{thm:Manin}, showing that ${}^T X$ has a $K$-point is
sufficient.
The inclusion $H \subset G$ induces an exact sequence
\[
H^1(K,H) \to H^1(K,G) \to H^1(K,C_2)
\]
in Galois cohomology.
Since the image of $T$ in $H^1(K,C_2)$ is split over a field extension
$L/K$ of degree $1$ or $2$,
the torsor $T_L$ descends to an $H$-torsor $S \to \Spec(L)$.
In particular, $({}^{T} X)_L \simeq {}^{T_L} X \simeq {}^S X$.
By our assumption, we know that ${}^S X$ is $L$-unirational for any
$H$-torsor $S \to \Spec(L)$.
Thus, $({}^{T} X)_L$ has an $L$-point.
By Proposition~\ref{prop:cubicQuad},
we conclude that ${}^{T} X$ has a $K$-point as desired.
\end{proof}

\begin{remark}
Note that the same proof also works for cubic hypersurfaces
in higher dimensions under some mild technical hypotheses
(see Theorem~10.5~of~\cite{DunRei15Versality}).
\end{remark}

Theorem~\ref{thm:Hermite} follows quite easily from
Theorem~\ref{thm:cubicIndex2}.
However, one can extract a purely geometric argument without explicit
use of Galois cohomology, which we believe is of independent interest.
The following proof does not rely on \cite{DunRei15Versality}.

\begin{proof}[Proof of Theorem~\ref{thm:Hermite}]
There are two Galois conjugate $3$-dimensional faithful representations
of $A_5$; pick one of them.
Projection from the origin produces a dominant rational $A_5$-equivariant map
\[ \alpha : \bbC^3 \dasharrow \bbP^2 \]
and
thus, by construction, $\bbP^2$ is $A_5$-unirational.
There is a unique $A_5$-orbit containing exactly $6$ points in $\bbP^2$.
Blowing up these points gives an $A_5$-equivariant birational map
\[ \beta : \bbP^2 \dasharrow X \]
to a smooth cubic surface $X$.
The composition $\psi = \beta \circ \alpha$ shows that $X$ is
$A_5$-unirational.

The Clebsch diagonal cubic surface is the only smooth cubic surface with a
faithful $A_5$-action; indeed, it is the only one with a $C_5$-action
(see Theorem~9.5.8~of~\cite{Dol12Classical}).
One can also see this directly by computing the invariants of degree $3$
of the $4$-dimensional representations of $A_5$.
In any case, the surface $X$ described above must be the Clebsch.

At this point, we can conclude that $X$ is $S_5$-unirational
by Theorem~\ref{thm:cubicIndex2}.
Instead, we will supply a more elementary argument.
Only the very last step requires the group to be $S_5$,
so until then we will use $H = A_5$ and $G = S_5$
to emphasize the parallels with Theorem~\ref{thm:cubicIndex2}.

By Proposition~\ref{prop:NoName}, we may assume that we have an
$H$-equivariant dominant rational map
\[ \psi : V \dasharrow X \]
where $V$ is a faithful linear representation of $G$
(this is a linear representation of $H$ by restriction).
We want to construct a $G$-equivariant dominant rational map $V
\dasharrow X$.

Let $\sigma \in G$ be any element such that $\sigma \notin H$.
Since $H$ has index $2$ in $G$, $\sigma^2 \in H$ and $G$ is a disjoint
union of the cosets $H$ and $\sigma H$.
Define a $G$-action on $X \times X$ via
\[
g(x,y) = \begin{cases}
(gx,gy) &\textrm{if } g \in H\\
(gy,gx) &\textrm{if } g \in \sigma H
\end{cases}
\]
where $g \in G$ and $(x,y) \in X \times X$.
The $G$-variety $X \times X$ can be viewed as an equivariant
analog of the Weil restriction of a quadratic extension.
Note that the diagonal embedding $X \hookrightarrow X \times X$
is $G$-equivariant.

We now construct a $G$-equivariant map $\tau : V \dasharrow X \times X$
using $\psi$.
We define
\[ \tau(v):=\left(\sigma\psi(\sigma^{-1}v),\psi(v)\right) \]
for every $v \in V$ which is in the domain of definitions of
both $\psi$ and $\psi \circ \sigma^{-1}$.
We claim that $\tau$ is $G$-equivariant.
Indeed, for $h \in H$, we have
\[ \tau(hv) = (\sigma\psi(\sigma^{-1}hv),\psi(hv))
 = (\sigma\sigma^{-1}h\sigma\psi(\sigma^{-1}v),h\psi(v))
= h\tau(v) \]
and, for $\sigma$, we have
\[ \tau(\sigma v) = (\sigma\psi(v),\psi(\sigma v))
= \sigma\tau(v) \ . \]
We have checked equivariance on the generating set, thus we
may conclude that $\tau$ is $G$-equivariant.

Consider the
``third intersection map'' $\omega : X \times X \dasharrow X$ which takes a
general pair of points $(x,y)$ to the unique third point on $X$
lying on the line through $x$ and $y$ in $\bbP^n$.
The map $\omega$ is a well-defined dominant rational map by Lemma 3.2 of
\cite{Kol02Unirationality}.
Note that since $G$ carries the line through $x$ and $y$ to the line
through $g(x)$ and $g(y)$, we see that $\omega$ is $G$-equivariant.

We would like to form the composition $\omega \circ \tau$
so we need to check that it is well-defined.
The indeterminacy locus of $\omega$ contains only points
$(x,y) \in X \times X$ such that either the line $\overline{xy}$ is
undefined or the line $\overline{xy}$ lies on $X$.
The first case corresponds to the diagonal subvariety
$X \subset X \times X$,
while the second case occurs only when both $x$ and $y$ lie on one of
the 27 lines.

If the image of $\tau$ is contained in the diagonal, then $\psi$ is
already $G$-equivariant and we are done.  Thus, we may assume that
the image of $\tau$ is not contained in the diagonal.
Since $\psi$ is dominant, $\tau$ is dominant on each of the factors of
$X \times X$.  Thus the image of $\tau$ contains a point $(x,y)$
where $x \ne y$ and $x$ is not on one of the 27 lines.
In particular, the image of
$\tau$ intersects the domain of definition of $\omega$ non-trivially.
Thus we have a rational $G$-equivariant map
$\psi' = \omega \circ \tau : V \dasharrow X$.

It remains to show that $\psi' = \omega \circ \tau$ is dominant.
For this last argument, we require $G = S_5$.
First, note $\im(\psi')$ cannot be a point since
$X$ has no $S_5$-fixed points.
If $\im(\psi')$ is a curve then it must be birational to $\bbP^1$.
But $\bbP^1$ does not carry a faithful action of $S_5$ and the
non-faithful actions (either trivial or via an involution)
have fixed points.
Thus $\psi'$ is dominant since the image must have dimension $2$.
\end{proof}

\section{Obstructions to Equivariant Unirationality}
\label{sec:obstructions}

In this section, we discuss some examples of $G$-actions on del Pezzo
surfaces $X$ where $G$ is abelian, but $G$ has no fixed points on $X$.
These surfaces are not $G$-unirational in view
of Corollary~\ref{cor:abelSubgroups}

The plane Cremona group $\Cr(2)$ is the set of birational automorphisms
of a rational surface.
For every finite subgroup $G$ of $\Cr(2)$, there exists a smooth proper
surface $X$ with a $G$-action (indeed, we can assume $X$ is a del Pezzo
surface or a conic bundle);
see Theorem~3.6~of~\cite{DolIsk09Finite}.
Since $G$-unirationality is an equivariant birational invariant,
it makes sense to ask about $G$-unirationality for a finite
subgroup $G$ of the Cremona group without specifying the surface.
We may find the following groups in the classifications of
Blanc~\cite{Bla06Finite}, and
Dolgachev~and~Iskovskikh~\cite{DolIsk09Finite}.

Any subgroup $G$ of the plane Cremona group which contains one of the
groups described in the following examples cannot be $G$-unirational.
Consequently, we may view them as obstructions to $G$-unirationality.

\begin{example}[Obstruction A]
Consider $X \simeq \bbP^1 \times \bbP^1$ and
$G = \langle g_1, g_2 \rangle \simeq C_2^2$ with action:
\begin{align*}
g_1 : (x_1:x_2)\times(y_1:y_2) &\mapsto (x_2:x_1)\times(y_1:y_2)\\
g_2 : (x_1:x_2)\times(y_1:y_2) &\mapsto (x_1:-x_2)\times(y_1:y_2)
\end{align*}
where $(x_1:x_2)\times(y_1:y_2)$ are coordinates on $X$.
There are no $G$-fixed points on $X$.
Up to conjugacy in $\Aut(X)$, there are two other actions of $G$ on
$\bbP^1 \times \bbP^1$ which do not have fixed points.
However, all three of these subgroups correspond to one conjugacy class
in $\Cr(2)$, denoted P1.22.1 using Blanc's notation.
\end{example}

\begin{example}[Obstruction B]
Consider $X \simeq \bbP^2$ and
$G = \langle g_1, g_2 \rangle \simeq C_3^2$ with action:
\begin{align*}
g_1 : (x_1:x_2:x_3) &\mapsto (x_2:x_3:x_1)\\
g_2 : (x_1:x_2:x_3) &\mapsto (x_1:\epsilon x_2:\epsilon^2 x_3)
\end{align*}
where $\epsilon$ is a primitive third root of unity.
This surface has no $G$-fixed points.
Its conjugacy class in $\Cr(2)$ is denoted 0.V9 in Blanc's notation.
\end{example}

Before describing the next two examples, we prove the following lemma.

\begin{lem} \label{lem:genus1fixed}
An automorphism $g$ of a smooth genus 1 curve $E$ has a fixed point if and
only if $g$ is not a non-trivial translation.
\end{lem}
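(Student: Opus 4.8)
The plan is to understand the geometry of automorphisms of a genus $1$ curve $E$ by fixing a base point and converting $E$ into an elliptic curve, then classifying automorphisms via their behavior as group homomorphisms composed with translations. Recall that once we choose a point $O \in E$ as the identity, the automorphism group of $E$ as a variety is the semidirect product of the translation subgroup $E$ (acting by $t_a(x) = x + a$) with the group $\Aut(E,O)$ of automorphisms fixing $O$, which are genuine group homomorphisms. So any automorphism $g$ can be written uniquely as $g = t_a \circ \phi$ where $\phi \in \Aut(E,O)$ and $a \in E$. I would carry out the argument by analyzing when such a composite has a fixed point.

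First I would dispose of the easy direction: if $g$ is a non-trivial translation $t_a$ with $a \neq O$, then $g(x) = x + a = x$ forces $a = O$, a contradiction, so $g$ has no fixed point. This shows that a non-trivial translation has no fixed point, giving one implication by contraposition. For the converse, suppose $g$ is not a non-trivial translation; I must produce a fixed point. Writing $g = t_a \circ \phi$, the case $\phi = \id$ gives either $g = \id$ (every point is fixed) or a non-trivial translation (excluded by hypothesis), so I may assume $\phi \neq \id$. The fixed-point equation becomes $\phi(x) + a = x$, i.e. $(\id - \phi)(x) = a$, where $\id - \phi$ is viewed as an endomorphism of the group $E$.

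The key step is then to show that the group endomorphism $\psi := \id - \phi$ is surjective whenever $\phi \neq \id$, so that $\psi(x) = a$ has a solution $x$. This is where I expect the main work to lie. Since $\phi$ is a non-trivial automorphism of finite order fixing $O$ (here $\phi$ has order $2$, $3$, $4$, or $6$ over $\bbC$), I would argue that $\psi = \id - \phi$ is a non-zero endomorphism of $E$: it is zero only if $\phi = \id$. A non-zero endomorphism of an elliptic curve is an isogeny, hence surjective (it is a non-constant morphism between complete curves of the same dimension, so its image is all of $E$). Therefore $\psi$ is surjective and the equation $\psi(x) = a$ is solvable, yielding a fixed point of $g$. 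The only subtlety is verifying that $\psi$ is genuinely nonzero, which reduces to the observation that a non-identity automorphism fixing $O$ cannot equal the identity map.

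The main obstacle is making the surjectivity argument clean: one must know that every non-trivial automorphism of $(E,O)$ gives a non-zero isogeny $\id - \phi$, and that non-zero isogenies are surjective. Over $\bbC$ this is standard (a non-constant morphism of smooth projective curves is surjective), so I would simply invoke it rather than belabor it. I would present the proof in the two-paragraph structure above: the translation direction is immediate, and the reverse direction hinges on the surjectivity of $\id - \phi$, which I would state as the crux and settle by the isogeny argument.
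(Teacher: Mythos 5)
Your proof is correct and follows essentially the same route as the paper: decompose the automorphism as a translation composed with a group automorphism, then reduce the fixed-point question to solving $x - \phi(x) = a$, which is solvable because $\id - \phi$ is a non-constant (hence surjective) map of the complete curve. The paper states the surjectivity step more tersely (``the map $x \mapsto x - g(x)$ is non-constant''), while you justify it via the isogeny argument, but the underlying idea is identical.
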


\begin{proof}
Pick an origin for the group law making $E$ an elliptic curve.
Recall that the automorphism group of an elliptic curve
is a semidirect product of the translation group with
the group of group automorphisms.
Indeed, every automorphism $h$ has the form $h(x)=g(x)+a$ where $g$ is a
group automorphism of $E$ and $a$ is an element of $E$.
To find the fixed points we set $h(x)=x$ and solve $x-g(x)=a$ for $x$.
If $g$ is trivial, then $h$ is a translation.
Otherwise, the map $x \mapsto x-g(x)$ is non-constant
and $x-g(x)=a$ has a solution.
\end{proof}

\begin{example}[Obstruction C]
Here $X$ is a del Pezzo surface of degree $4$ and
$G = \langle g_1, g_2 \rangle \simeq C_2^2$ acts on $X$
where the fixed points of $g_1$ lie on an elliptic curve $E$ and
$g_2$ acts on $E$ as translation by a $2$-torsion element.
Explicitly,
$X$ is given by two equations in $\bbP^4$:
\[
x_1^2 + \cdots + x_5^2 = a_1x_1^2 + \cdots a_5x_5^2 = 0
\]
for distinct parameters $a_1, \ldots, a_5$, with
\begin{align*}
g_1(x_1:x_2:x_3:x_4:x_5) &= (x_1:x_2:x_3:x_4:-x_5)\\
g_2(x_1:x_2:x_3:x_4:x_5) &= (-x_1:-x_2:x_3:x_4:x_5) \ .
\end{align*}
These examples form a non-empty family of conjugacy classes denoted C2,2
in Blanc's notation.
This example will be studied in Section \ref{sec:deg4}.
\end{example}

\begin{example}[Obstruction D]
Here $X$ is a del Pezzo surface of degree $3$ and
$G = \langle g_1, g_2 \rangle \simeq C_3^2$ acts on $X$
where the fixed points of $g_1$ lie on an elliptic curve $E$ and
$g_2$ acts on $E$ as translation by a $3$-torsion element.
Explicitly,
$X$ is given by one equation in $\bbP^3$:
\[
x_1^3 + x_2^3 + x_3^3 + x_4^3 + \alpha x_1x_2x_3 = 0
\]
for a parameter $\alpha$, with
\begin{align*}
g_1(x_1:x_2:x_3:x_4) &= (x_1:x_2:x_3:\epsilon x_4)\\
g_2(x_1:x_2:x_3:x_4) &= (x_3:x_1:x_2:x_4)
\end{align*}
where $\epsilon$ is a primitive third root of unity.
These examples form a non-empty family of conjugacy classes denoted 3.33.2
in Blanc's notation.
This example will be studied in Section \ref{sec:deg3}.
\end{example}

For any rational $G$-surface $X$, we say that $X$
\emph{has obstruction A (resp. B, C, D)} if there exists a subgroup $H$
of $G$ such that $X$ is $H$-equivariantly birationally equivalent
to an $H$-surface in one of the corresponding examples above.
Equivalently, considering $G$ has a subgroup of the Cremona group,
we say $G$ has a given obstruction if it contains a subgroup
conjugate to one of those in the above examples.

To prove Theorem~\ref{thm:main},
we will instead prove the following:

\begin{thm} \label{thm:obsMain}
Let $X$ be a del Pezzo surface of degree $d \ge 3$ with a faithful
$G$-action.
Then $X$ is $G$-unirational if and only if it satifies the condition
given in Table~\ref{tab:obsTable}.
\end{thm}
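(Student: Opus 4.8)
The plan is to prove the equivalent statement that $X$ is $G$-unirational if and only if $G$, viewed as a subgroup of $\Cr(2)$, contains none of the four obstructions A, B, C, D; the rows of Table~\ref{tab:obsTable} merely record which of these can occur in each degree. The ``only if'' direction is immediate from the preliminaries. Each obstruction is an abelian group (isomorphic to $C_2^2$ or $C_3^2$) acting without fixed points, so suppose $X$ has, say, obstruction C: some abelian $H \le G$ makes $X$ an $H$-surface that is $H$-birational to the relevant example. If $X$ were $G$-unirational, then restricting a faithful representation of $G$ to $H$ would make $X$ be $H$-unirational, whence $X$ has a smooth $H$-fixed point by Corollary~\ref{cor:abelSubgroups}. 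But the existence of a fixed point of an abelian group is an equivariant birational invariant of smooth proper surfaces by Proposition~\ref{prop:GoingDown}, and the example has none --- a contradiction.

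The substance is the converse: absence of all four obstructions forces $G$-unirationality. First I would reduce to a \emph{minimal} rational $G$-surface by blowing down $G$-orbits of skew $(-1)$-curves; this preserves both the conjugacy class of $G$ in $\Cr(2)$ and, by equivariant birational invariance, $G$-unirationality. On a minimal model --- a del Pezzo surface with $\Pic(X)^G \simeq \bbZ$ or a conic bundle --- the goal is uniform: show that the hypothesis ``every abelian subgroup has a fixed point'' produces \emph{either} an honest $G$-fixed point, giving $G$-unirationality via Corollary~\ref{cor:fix2Guni}, \emph{or} an equivariant reduction to a case already settled. Cyclic $G$ is handled by Corollary~\ref{cor:cyc2Guni}, so I may always assume $G$ is non-cyclic.

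I would then argue degree by degree. For $d \ge 5$ the analysis is essentially known: the groups admitting a $G$-unirational surface are governed by the classification of essential dimension $2$ in \cite{Dun13Finite}, with the degree-$5$ case implicit in \cite{BuhRei97On-the-essential}, and one checks via the embedding~\eqref{eq:Weyl} and Proposition~\ref{prop:ratCyclic} that obstructions A and B are the only fixed-point-free abelian actions occurring on $\bbP^1 \times \bbP^1$ and $\bbP^2$. The real work is in degrees $4$ and $3$, where obstructions C and D live and $\Aut(X)$ embeds in the large Weyl groups $W(D_5)$ and $W(E_6)$. For $d = 4$ I would use the two-quadric model $\sum x_i^2 = \sum a_i x_i^2 = 0$ and its diagonal $C_2^4 \le \Aut(X)$, whose involutions split by their fixed loci: some fix finitely many points, others fix an elliptic curve $E$ cut out by a coordinate hyperplane. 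Obstruction C is precisely the configuration of a curve-fixing involution $g_1$ paired with an involution $g_2$ acting on $E$ as a $2$-torsion translation, which is fixed-point-free by Lemma~\ref{lem:genus1fixed}; the plan is to show that avoiding this configuration forces the fixed loci of a generating set to meet. For $d = 3$ I would exploit Theorem~\ref{thm:cubicIndex2}, iterating the index-$2$ reduction to strip the $2$-part of $G$ and so reduce to groups with abelianization of odd order; the only relevant perfect subgroup, $A_5$, is handled directly by the blow-up construction from the proof of Theorem~\ref{thm:Hermite}, while the remaining groups acquire a $G$-fixed point unless obstruction D --- the analogous $C_3^2$ $3$-torsion translation configuration, again fixed-point-free by Lemma~\ref{lem:genus1fixed} --- is present.

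The hardest part, and the genuine content of the theorem, is this degree $3$ and $4$ analysis: one must match the abstract hypothesis that \emph{every} abelian subgroup has a fixed point to the \emph{four} explicit obstructions. Concretely this requires enumerating the conjugacy classes of non-cyclic abelian subgroups of $W(D_5)$ and $W(E_6)$, computing their fixed loci on $X$ via character-theoretic and Lefschetz input (Proposition~\ref{prop:ratCyclic}) together with the elliptic-curve Lemma~\ref{lem:genus1fixed}, and verifying that the only minimal fixed-point-free configurations are conjugate to A, C in degree $4$ and B, D in degree $3$. Equally delicate is supplying the positive unirationality maps in the fixed-point-free-but-unobstructed cases, where one either descends through Theorem~\ref{thm:cubicIndex2} or builds an explicit equivariant dominant map from a representation using the third-intersection map $\omega \colon X \times X \dasharrow X$ as in the proof of Theorem~\ref{thm:Hermite}. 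Assembling these case analyses into the single uniform table is where the bulk of the effort lies.
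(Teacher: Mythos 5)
Your overall skeleton matches the paper's: the ``only if'' direction via Corollary~\ref{cor:abelSubgroups} and the birational invariance coming from Proposition~\ref{prop:GoingDown}, a degree-by-degree analysis, the index-$2$ reduction (Theorem~\ref{thm:cubicIndex2}) for cubics, and a fixed-point classification in degree~$4$. But there is a concrete gap in your degree-$3$ dichotomy. You claim that after stripping index-$2$ subgroups, the only group needing a bespoke argument is the perfect group $A_5$, and that ``the remaining groups acquire a $G$-fixed point unless obstruction D is present.'' This fails for $G \simeq A_4$: it has no subgroup of index $2$, contains no $C_3^2$ (so neither obstruction B nor D can occur), and yet it \emph{never} fixes a point on a smooth cubic surface. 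Indeed, the $A_4$-action on $\bbP^3$ is $\sigma \oplus \chi$ with $\sigma$ the irreducible $3$-dimensional representation, so the unique candidate fixed point is $(0:0:0:1)$; writing the invariant cubic as $a x_4^3 + b x_4(x_1^2+x_2^2+x_3^2) + c x_1x_2x_3$, that point lies on $X$ only when $a=0$, in which case $X$ is singular there. The paper therefore supplies a genuine positive construction for $A_4$ (inside Lemma~\ref{lem:S5unirational}): the smooth cubic $A_4$-surfaces form an irreducible family containing the Clebsch, the $A_5$-invariant sextuple of skew lines on the Clebsch propagates to an $A_4$-invariant sextuple of skew lines on every member of the family, and blowing these down gives an $A_4$-equivariant birational map to $\bbP^2$, whose action lifts to the linear representation $\bbC^3$. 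Your fallback device, the third-intersection map $\omega$, is used in the paper only for the index-$2$ step in the elementary proof of Theorem~\ref{thm:Hermite} and gives no visible way to handle $A_4$.

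A second, smaller gap concerns $d \ge 6$: there too the positive direction is not a pure fixed-point statement. For example, $A_4 \subset \PGL_3(\bbC)$ acts on $\bbP^2$ without fixed points and without obstruction B, yet must be shown unirational; the paper does this via the toric local-global principle (Corollary~3.6 of \cite{Dun13Finite}), which reduces $G$-unirationality of $\bbP^2$, $\bbP^1 \times \bbP^1$ and the degree-$6$ surface to Sylow subgroups, after which fixed-point arguments suffice. Your appeal to the essential-dimension-$2$ classification only identifies which groups admit \emph{some} faithful $G$-unirational surface, not whether the \emph{given} surface is one --- a distinction the paper itself flags at the end of Section~\ref{sec:prelims}. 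In both places the missing ingredient is the same: a mechanism, beyond fixed points and the obstruction list, for establishing unirationality of the fixed-point-free but unobstructed actions; this is where the real content of the paper's proof lies.
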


\begin{table}[h]
\begin{tabular}{|c|c|c|}
\hline
Degree & Form & $G$-unirational \\
\hline
\hline
9 & $\bbP^2$ & no obstruction B \\
\hline
8 & $\bbP^1 \times \bbP^1$ & no obstruction A \\
 & $\mathbb{F}_1$ & always \\
\hline
7 &  & always \\
\hline
6 &  & no obstruction A or B \\
\hline
5 & $\overline{\mathcal{M}}_{0,5}$ & always \\
\hline
4 &  & no obstruction A or C \\
\hline
3 & cubic & no obstruction B or D \\
\hline
\end{tabular}
\caption{Obstructions to equivariant unirationality}
\label{tab:obsTable}
\end{table}

Note that in Theorem~\ref{thm:main}, (a) implies all the other
statements.
All of the statements imply the lack of obstructions found in
Table~\ref{tab:obsTable}.
Thus, Theorem~\ref{thm:obsMain} implies Theorem~\ref{thm:main}.
To prove Theorem~\ref{thm:obsMain}, we must show that the
obstructions are the only obstructions to $G$-unirationality.

\section{Degree $d \ge 5$}
\label{sec:deg5to9}

\begin{proof}[Proof of Theorem~\ref{thm:obsMain} for $d \ge 6$.]

Aside from $\bbP^1 \times \bbP^1$ there is only one surface of degree
$8$; it is $\bbP^2$ with a single point blown up.
Blowing down the resulting exceptional divisor is an equivariant
operation which provides a $G$-fixed point on $\bbP^2$
so this surface is always $G$-unirational.

In degree $7$, we have $\bbP^2$ blown up at two
points.  The strict transform of the line between these two points can
be blown down to a $G$-fixed point on $\bbP^1 \times \bbP^1$, so this is
always $G$-unirational as well.
It remains only to consider the surfaces $\bbP^2$, $\bbP^1 \times
\bbP^1$ and the del Pezzo surface of degree $6$

All of these surfaces are toric varieties.
By Corollary~3.6~of~\cite{Dun13Finite},
we see that (a) and (c) of Theorem~\ref{thm:main} are equivalent
(recall that $G$-versal and $G$-unirational are equivalent here by
Proposition~\ref{prop:wv2Guni}.)
In particular, we may assume that $G$ is a $p$-group.

By Proposition~3.10~of~\cite{Dun13Finite}, any $p$-group acts on a smooth
toric surface as a subgroup of $(\bbC^\times)^2 \rtimes H$ where $H$ is a
finite subgroup of $\GL_2(\bbZ)$.
Since any finite subgroup of $\GL_2(\bbZ)$ has order divisible by $2$ or
$3$, if $G$ is a $p$-group where $p \ge 5$ then $G$ is a subgroup
of $(\bbC^\times)^2$.  We conclude that $X$ is always $G$-unirational
for $p \ge 5$ by Lemma~3.8~of~\cite{Dun13Finite}.
Thus, it suffices to assume that $G$ is a $2$-group or a $3$-group.
In either of these cases, a del Pezzo surface of degree $6$ is not
$G$-minimal (there is always a $G$-orbit of exceptional lines that can
be blown down).
Thus we are reduced to the cases of $\bbP^2$ and $\bbP^1 \times \bbP^1$.

First, we consider $X = \bbP^2$.
Any irreducible linear representation of a $2$-group has degree a power of $2$.
Thus, any $3$-dimensional representation of a $2$-group has a
$1$-dimensional subrepresentation.  Thus, any $2$-group $G$ acting on
$\bbP^2$ has a fixed point and, thus, is $G$-unirational.
We may assume that $G$ is a $3$-group acting on $\bbP^2$;
and thus the preimage $\widetilde{G}$ of $G$ in $\GL_3(\bbC)$
acts by monomial matrices.
Note that $\widetilde{G}$ is abelian if and only if it is
diagonalizable; in this case $G$ has a fixed point and $X$ is $G$-unirational.
Thus, there must be a non-diagonal element $g$ in $\widetilde{G}$ of the form
\[
\begin{pmatrix}
0 & a & 0\\
0 & 0 & b\\
c & 0 & 0
\end{pmatrix}
\]
where $a,b,c \in \bbC^\times$.  After possibly choosing a different representative
with the same image in $\PGL_3(\bbC)$, we may assume $g$ has order $3$.
Then a (diagonal) change of coordinates allows us to assume $g$ is a
permutation matrix.
Note that $g$ does not commute with any diagonal matrix except scalar
matrices.
Thus there is a non-scalar diagonal matrix $h$ in $\widetilde{G}$ whose image
in $G$ has order $3$.
Up to permutation or picking a different representative for the same
element in $G$, we see that $h$ has diagonal entries $(1,\epsilon,\epsilon^2)$
or $(1,1,\epsilon)$.
The first choice means that the image of $\langle g, h \rangle$ is
Obstruction B; the second choice means that it
contains Obstruction B.
Since all other cases are $G$-unirational, we've proven the theorem for
$\bbP^2$.

Now, we consider $X = \bbP^1 \times \bbP^1$.
Here any $3$-group must be a subgroup of the torus $(\bbC^\times)^2$,
and thus is always $G$-unirational.
It suffices to assume $G$ is a $2$-group.
The automorphism group of $X$ is $\PGL_2(\bbC)^2 \rtimes C_2$.
Let $H = G \cap \PGL_2(\bbC)^2$ be a subgroup of index $1$ or $2$.
If $H$ is trivial, then $G$ is cyclic, $G$ has a fixed point and $X$
is $G$-unirational.
If $H$ has a fixed point, then the action of $H$ on each $\bbP^1$
must have a fixed point.  If $G=H$ then $X$ is
$G$-unirational; otherwise $H$ acts non-trivially on each $\bbP^1$ and
$X$ has exactly four $H$-fixed points.  Since $G$ takes $H$-fixed fibers to
$H$-fixed fibers, we see that $G$ has exactly $2$ fixed points.  Thus
$X$ is $G$-unirational.
It remains to consider the case where $H$ does not have fixed points.
Let $K$ be any subgroup of $H$ isomorphic to $C_2^2$ which does not have
fixed points on one of the copies of $\bbP^1$.
This group $K$ must exist since any action of a finite group on $\bbP^1$
without fixed points must contain a subgroup isomorphic to $C_2^2$.
There are $3$ such groups up to conjugacy in $\Aut(X)$, but they are all
birationally equivalent (see
Proposition~6.2.3~and~6.2.4~in~\cite{Bla06Finite}).
Obstruction A is one of these (equivalent) groups, so the theorem is
proved.
\end{proof}

The central idea for the case $d=5$ is contained in the proof of
Theorem~6.5(c)~of~\cite{BuhRei97On-the-essential}.
For completeness, we reproduce it here in more geometric language.

\begin{proof}[Proof of Theorem~\ref{thm:obsMain} for $d = 5$.]
Recall that the del Pezzo surface $X$ of degree $5$ is isomorphic to
$\overline{\mathcal{M}}_{0,5}$,
the moduli space of stable curves of genus 0 with 5 marked points.
We will show that $X$ is $G$-unirational for the entire automorphism group
$G \simeq S_5$.
There is an evident $S_5$-equivariant dominant rational map
$(\bbP^1)^5 \dasharrow X$ using the moduli space interpretation.
Noting that $(\bbP^1)^5$ is $S_5$-equivariantly birationally equivalent
to the standard permutation representation on $S_5$,
we see that $X$ is $S_5$-unirational.
\end{proof}

\section{Degree $4$}
\label{sec:deg4}

Let $X$ be a del Pezzo surface of degree $4$.
We recall a description of their structure and their automorphism groups
from Section 6.4 of \cite{DolIsk09Finite}.
In appropriate coordinates, they may be defined by the equations
\[
x_1^2 + \cdots + x_5^2 = a_1x_1^2 + \cdots + a_5x_5^2 = 0
\]
in $\bbP^4$ where $a_1,\ldots,a_5$ are distinct parameters.

The group of automorphisms of $\Pic(X)$ which preserve the intersection
form is isomorphic to the Weyl group $W(D_5)$ of the root system $D_5$.
We have $W(D_5) \simeq C_2^4 \rtimes S_5$.
The orthogonal complement $K_X^\perp$ of $K_X$ in $\Pic(X)$
is a lattice of rank $5$. 
The group $S_5$ permutes a basis of $K_X^\perp$ and elements of $C_2^4$
change the sign of an even number of those basis vectors.
The automorphism group of $X$ has a faithful action on $\Pic(X)$
thus we have $\Aut(X) \hookrightarrow W(D_5)$.

Let $N$ denote the normal subgroup of $W(D_5)$ isomorphic to $C_2^4$.
The action of $N$ can be realized on every surface $X$
by changing the sign of an even number of the coordinates $x_i$.
We will denote involutions in $N$ using the notation $\iota_A$ where $A$
is the subset of $\{1, \ldots, 5\}$ corresponding to the coordinates
whose sign is changed.  If $A$ is a subset with an odd
number of elements, then we define $\iota_A=\iota_{\bar{A}}$ where
$\bar{A}$ is the complement of $A$ in $\{1, \ldots, 5\}$.

Within the group $N$, an involution is of the \emph{first kind} if it
changes the sign of $4$ variables;
the involution $\iota_i$ fixes the elliptic curve $x_i=0$.
An involution is of the \emph{second kind} if it changes the sign of $2$
variables; the involution $\iota_{ij}$ where $i \ne j$ fixes a
set of $4$ points given by $x_i=x_j=0$.

\begin{lem} \label{lem:C22classification}
The conjugacy classes of subgroups of $N$ which are isomorphic to
$C_2^2$ are determined by the number of elements of each kind and are
listed in Table~\ref{tab:C22DP4} with the given interpretations.
\end{lem}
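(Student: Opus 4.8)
The plan is to translate the problem into linear algebra over $\bbF_2$ and then run a short orbit count under $S_5$. First I would identify $N \simeq C_2^4$ with the $\bbF_2$-vector space of even-cardinality subsets of $\{1,\dots,5\}$, the group law being symmetric difference $A \triangle B$ and $\iota_A$ corresponding to the subset $A$; this matches the convention $\iota_A = \iota_{\bar A}$, since exactly one of $A,\bar A$ is even. Under this identification the fifteen nonzero elements split into the five of the first kind (the size-$4$ subsets, i.e.\ the $\iota_i$) and the ten of the second kind (the size-$2$ subsets, i.e.\ the $\iota_{ij}$). Because $N$ is abelian, conjugation by $N$ fixes every subgroup, so conjugacy of subgroups of $N$ inside $W(D_5) \simeq N \rtimes S_5$ is exactly the orbit relation for the $S_5$-action permuting the indices $1,\dots,5$. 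Since this action preserves cardinalities, the number of elements of each kind is automatically a conjugacy invariant; the content of the lemma is that it is a \emph{complete} invariant.

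A subgroup isomorphic to $C_2^2$ is a two-dimensional subspace $\{0,A,B,A\triangle B\}$, hence is determined by its three nonzero elements, each of which is the symmetric difference of the other two. I would next read off the kind of $A \triangle B$ from the kinds of $A$ and $B$ via $|A \triangle B| = |A| + |B| - 2|A \cap B|$. The key observation is that two size-$4$ subsets of a $5$-element set meet in at least $3$ elements, so their symmetric difference has size at most $2$; hence a subspace can never consist of three elements of the first kind. A routine check of the remaining cardinality combinations shows that exactly three kind-profiles occur: three of the second kind; one of the first and two of the second; and two of the first and one of the second.

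It then remains to show that each realizable profile is a single $S_5$-orbit, which I would do by exhibiting the combinatorial data pinning down the subgroup and invoking transitivity. In the profile with three second-kind elements the three pairs meet pairwise and form a triangle on a $3$-subset of $\{1,\dots,5\}$, so the subgroup is determined by that $3$-subset, and $S_5$ is transitive on $3$-subsets. In the profile with one first-kind element $\iota_i$ and two second-kind elements, the two pairs are disjoint and partition $\{1,\dots,5\}\setminus\{i\}$, so the data is an index $i$ together with a pairing of the remaining four indices, on which $S_5$ acts transitively; this is the class realized by Obstruction C, where $\iota_i$ fixes an elliptic curve and the second-kind elements act on it. In the profile with two first-kind elements $\iota_i,\iota_j$, their product is forced to equal $\iota_{ij}$, so the subgroup is determined by the pair $\{i,j\}$, again a single orbit. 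Matching these three orbits with the stated fixed-locus interpretations completes the argument.

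The work is finite and elementary, so there is no deep obstacle; the part needing the most care is the completeness of the case analysis in the second step—checking that the cardinality arithmetic really forces the product's kind in each case and that no fourth profile slips through—together with confirming that each profile is genuinely one orbit rather than a union of several. A convenient consistency check is that the three orbit sizes $10$, $15$, $10$ sum to $35$, the total number of two-dimensional subspaces of $\bbF_2^4$.
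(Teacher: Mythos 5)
Your combinatorial classification is correct, and in fact more careful than the paper's own treatment, which dismisses this step as following ``by inspection'' from the diagonality of the action: the identification of $N$ with the even-cardinality subsets of $\{1,\dots,5\}$ under symmetric difference, the reduction of $W(D_5)$-conjugacy to the $S_5$-action on indices, the cardinality arithmetic ruling out a subgroup with three elements of the first kind, and the transitivity of $S_5$ on each of the three profiles are all right, as is the consistency check $10+15+10=35$. However, the lemma asserts not only the classification but also the \emph{interpretations} in Table~\ref{tab:C22DP4}, and here your proposal has a genuine gap: ``matching these three orbits with the stated fixed-locus interpretations'' is not an argument. You never verify that Type I groups have fixed points on $X$ while Types II and III do not. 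The paper does this by noting that fixed points of a diagonal action must lie in the projectivized joint eigenspaces; intersecting these with the two quadrics shows that Type I has four fixed points $(\pm a:\pm b:\pm c:0:0)$, whereas for Types II and III every candidate locus forces all coordinates to vanish. This fixed-point analysis is exactly what the lemma gets used for later (Lemma~\ref{lem:dp4fixedPt} and the degree-$4$ case of Theorem~\ref{thm:obsMain}), so it cannot be waved off.

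The most serious omission is Type III. Obstruction A is, by definition, a $C_2^2$-action on $\bbP^1\times\bbP^1$, so asserting that a Type III group on a \emph{degree-$4$} del Pezzo surface ``has obstruction A'' requires producing a $G$-equivariant birational map to that example; no orbit count can supply this. The paper's argument is genuinely geometric: one computes $\Pic(X)^G\simeq\bbZ^3$, concludes that $X$ is not $G$-minimal, equivariantly blows down to a del Pezzo surface of degree $\ge 5$, invokes the fact that the (non)existence of fixed points is an equivariant birational invariant of smooth proper surfaces (Proposition~\ref{prop:GoingDown}), and then identifies obstruction A as the only fixed-point-free possibility in degree $\ge 5$. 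Similarly for Type II, matching the group to the explicit equations of Obstruction C is a reasonable start, but the defining feature of that example---that the second generator acts on the elliptic curve $E=\{x_i=0\}$ as translation by a $2$-torsion point---still needs proof: one checks that the second-kind involutions have no fixed points on $E$ (their fixed points $x_j=x_k=0$ miss $E$ because the quadrics would force the remaining coordinates to vanish) and then applies Lemma~\ref{lem:genus1fixed}. Without these geometric verifications the lemma as stated is not proved, even though your combinatorial skeleton is sound.
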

\begin{table}[h]
\begin{tabular}{|c|c|c|c|c|}
\hline
Type & 1st Kind & 2nd Kind & Interpretation & \cite{Bla06Finite} \\
\hline
I & 2 & 1 & has fixed points & C.22 \\
II & 1 & 2 & obstruction C & C.2,2 \\
III & 0 & 3 & obstruction A & P1.22.1 \\
\hline
\end{tabular}
\caption{Conjugacy classes of $N$ isomorphic to $C_2^2$}
\label{tab:C22DP4}
\end{table}

\begin{proof}
Since the action is diagonal in the basis given by $x_1, \ldots, x_5$,
one sees that these are the only possiblities by inspection.
The fixed points must correspond to eigenvalues of the group action,
and we conclude that only subgroups of Type I have fixed points on $X$.

For Type II, the involution of the first kind fixes pointwisely an
elliptic curve $E$, but the involutions of the second kind do not fix
any points on $E$.  By Lemma~\ref{lem:genus1fixed}, we conclude that
this must be obstruction C.

For Type III, we find that $\Pic(X)^G \simeq \bbZ^3$.
Thus $X$ is not minimal and we may equivariantly blow down
some exceptional curves.
Since the existence of a fixed point is an equivariant birational
invariant of a smooth surface, $X$ is equivariantly birational to a del
Pezzo surface of degree $\ge 5$ without fixed points.
The only possiblity is that $X$ has obstruction A.
\end{proof}

The possible splittings $S_5 \subset W(D_5)$ correspond to a choice of
geometric marking of $5$ skew exceptional curves.
Thus, any splitting $S_5 \subset W(D_5)$ has the following interpretation.
The surface $X$ is obtained by blowing up a set of $5$ points in general
position on $\bbP^2$.  These points sit on a unique conic $C$.  Each
automorphism of $C$ which preserves the $5$ points extends to an
automorphism of $\bbP^2$ and to the blowup $X$.
Note that not all subgroups of $S_5$ can be realized on every surface.
Indeed, the images of the map $\Aut(X) \to S_5$ can only be trivial,
$C_2$, $S_3$, $C_4$ or $D_{10}$.
Moreover, the involutions in the image of $\Aut(X) \to S_5$ are all
conjugate to the permutation $(12)(34)$.

\begin{lem} \label{lem:dp4fixedPt}
Let $X$ be a del Pezzo surface of degree $4$ with a faithful action of a
finite group $G$.
The group $G$ has a fixed point if and only if
$G$ does not contain a subgroup of Type II or III.
\end{lem}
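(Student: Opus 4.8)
The forward direction is immediate: a point fixed by $G$ is fixed by every subgroup, whereas a subgroup of Type II (resp.\ III) is obstruction C (resp.\ A) by Lemma~\ref{lem:C22classification} and hence has no fixed point; so $G$ can contain neither.

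For the converse I would first determine $G_N := G \cap N$. Identify $N$ with the even-weight vectors in $\bbF_2^5$, so that the first-kind involutions are the weight-$4$ vectors and the second-kind involutions the weight-$2$ vectors, and a $C_2^2$ is of Type I, II, or III according as it contains $2$, $1$, or $0$ weight-$4$ vectors. A double count rules out a ``good'' $C_2^3 \le N$: each of the seven order-$4$ subgroups of a $C_2^3$ is a $C_2^2$, each nonzero element lies in exactly three of them, so if every one were Type I and $f$ denotes the number of weight-$4$ involutions we would get $3f = 2 \cdot 7 = 14$, which is impossible. Thus, under the hypothesis, $G_N$ is trivial, a single $C_2$, or a Type-I $C_2^2$.

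Since $N \trianglelefteq W(D_5)$, the group $G_N$ is normal in $G$, so $F := \mathrm{Fix}_X(G_N)$ is $G$-stable, $G$ acts on $F$ through $\bar G := G/G_N \le S_5$, and $\mathrm{Fix}_X(G) = \mathrm{Fix}_F(\bar G)$; it therefore suffices to find a $\bar G$-fixed point on $F$. I would argue by cases on $G_N$. If $G_N = 1$ then $G \cong \bar G$ is cyclic or dihedral, since the only images in $S_5$ are $1, C_2, S_3, C_4, D_{10}$; a cyclic $G$ has a fixed point by Proposition~\ref{prop:ratCyclic}, while for dihedral $G$ the conic model exhibits $G$ inside $\PGL_2 \hookrightarrow \PGL_3$ acting on the plane through the five blown-up points, and the pole of the invariant axis (the $\mathrm{Sym}^2$-invariant line spanned by $xy$) is a fixed point lying off the conic, hence lifting to a $G$-fixed point on $X$. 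If $G_N = \langle \iota_i \rangle$ is first-kind then $F = E_i$ is elliptic and $\bar G$ fixes $i$; here I would show the image of $G$ in $\Aut(E_i)$ has no nontrivial translation, so it is cyclic with a fixed point by Lemma~\ref{lem:genus1fixed}. If $G_N$ is second-kind or Type-I then $F$ is the four points $\{x_i = x_j = 0\} \cap X$, a coset of $E_i[2]$, and $\bar G$ preserves $\{i,j\}$ and permutes these four points; I would show it must fix one.

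The main obstacle is the last two cases, where the absence of a $\bar G$-fixed point must be converted into a forbidden subgroup. A nontrivial translation on $E_i$, or a fixed-point-free element on the four-point set, should be refined to an involution commuting with $G_N$ that translates $E_i$ by a $2$-torsion point; paired with a first-kind involution this is exactly obstruction C (Type II), and a suitable pair of such translations yields obstruction A (Type III), contradicting the hypothesis. Carrying this out requires controlling how elements with nontrivial $S_5$-part act on the invariant curve $E_i$ --- in particular verifying that an odd-order element of $\bar G$ (such as a $3$-cycle, which can occur when the image is $S_3$) acts on $E_i$ with a fixed point rather than as a translation, so that the only translations arising are the $2$-torsion ones coming from second-kind involutions. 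This bookkeeping, together with the constraint that the involutions in the image are all conjugate to $(12)(34)$, is the technical heart of the argument.
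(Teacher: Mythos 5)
Your skeleton is the same as the paper's: set $K = G \cap N$, use the hypothesis to force $K$ to be trivial, a single involution, or a Type I $C_2^2$, and then analyze the action of $\bar G = G/K$ on $\operatorname{Fix}_X(K)$. Your double count showing that every $C_2^3 \le N$ contains a Type II or III subgroup is a nice addition (the paper merely asserts that rank $\ge 3$ kernels are excluded). But what you have written is an outline, not a proof: exactly the cases that carry the content of the lemma are deferred with ``I would show,'' and the missing steps are not routine bookkeeping --- they are the proof. For $K = \langle \iota_i \rangle$ of the first kind, the decisive facts are that the three nontrivial $2$-torsion translations of $E_i$ are induced by the second-kind involutions $\iota_{jk} \in N$, and that the pointwise stabilizer of $E_i$ in $\Aut(X)$ is exactly $\langle \iota_i \rangle$ (the quartic curve $E_i$ spans the hyperplane $x_i = 0$); hence any element of $G$ translating $E_i$ by $2$-torsion lies in $N \cap G = K$, which acts trivially on $E_i$ --- a contradiction. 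Moreover your worry about $3$-cycles never materializes, since here $\bar G$ must be cyclic ($S_3$ and $D_{10}$ cannot fix $i$ while having all involutions of type $(2,2)$), so $\bar G \simeq C_3$ gives $G \simeq C_6$, settled by Proposition~\ref{prop:ratCyclic}. For the four-point cases, the paper's mechanism is again specific: the quotient $L/K$ of the preimage $L$ of $\bar G$ acts faithfully on the four points, the complement $K' = \langle \iota_{12}, \iota_{23}\rangle$ maps onto the Klein four-group $V \le S_4$, the image of $\bar G$ meets $V$ trivially because $G \cap N = K$, and every involution of $S_4$ outside $V$ is a transposition while every order-$6$ subgroup meeting $V$ trivially is a point stabilizer; either way a fixed point survives. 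None of this appears in your sketch.

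There is also an unjustified step in the one case you do argue in detail. When $K = 1$ and $\bar G \simeq S_3$ or $D_{10}$, you invoke the conic model, i.e., you assume $G$ permutes five skew exceptional curves. But $G \cap N = 1$ only says $G$ is a twisted copy $\{\nu(h)h : h \in H\}$ of $H \le S_5$ for some cocycle $\nu \colon H \to N$; the conic model applies only after one shows $\nu$ is a coboundary, so that $G$ is conjugate in $W(D_5)$ into a splitting $S_5 \subset W(D_5)$. This is precisely what the paper's computation accomplishes: conjugate the rotation to $(12345)$ or $(134)$ (using that order-$3$ and order-$5$ elements form single conjugacy classes in $W(D_5)$), write the reflection as $\iota_A(25)(34)$, and deduce from the dihedral relations that $\iota_A$ can be eliminated. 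Without that verification, ``the conic model exhibits $G$ inside $\PGL_2$'' assumes exactly what has to be proved.
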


\begin{proof}
Let $K$ be the kernel of $G \to S_5$ and $H$ be its image.

If $K$ has rank $3$ or $4$ then it must contain a subgroup of Type II or
III.  It remains to consider $K$ which is of Type I, is cyclic or is trivial.
In addition, we may assume $H$ is non-trivial.
We will show that all groups $G$ satisfying these conditions have a
fixed point.

If $K$ is of Type I then $K = \langle \iota_{4}, \iota_{5}\rangle$ after
a change of coordinates.
In this case, $H$ is isomorphic to $C_2$ or $S_3$.
Let $L$ be the preimage in $\Aut(X)$ of $H$ in $S_5$.
Let $K'$ be the group $\langle \iota_{12}, \iota_{23} \rangle$.
The group $K'$ is an $H$-invariant complement to $K$ in $N$
on which $H$ acts faithfully.

Since $K$ is normal in $L$, the set $S$ of $K$-fixed points on $X$ is
$L$-stable.
Explicitly, the set $S$ consists of the 4 points
\[ (\pm a:\pm b:\pm c:0:0) \]
for some values $a$, $b$, and $c$ in $\bbC$ and all possible sign
combinations.
Note that every non-trivial element of $K'$ acts on $S$ without fixed
point.
We have an embedding $L/K \hookrightarrow S_4$ where
$K'$ maps to the subgroup given by $\langle (12)(34), (13)(24) \rangle$.
Thus the image of $G$ in $L/K$ must act on the 4 points via an
involution conjugate to $(12)$ or as a subgroup isomorphic to $S_3$.
These always leave at least one point fixed in $S$.
We conclude that $G$ must fix a point on $X$ in this case.

We now consider the case where $K$ is cyclic.
If $K$ is an involution of the second kind, then again $H$ is
isomorphic to $C_2$ or $S_3$ and we may apply the reasoning above.

If $K$ is of the first kind, then there is an $G$-invariant genus 1
curve $E$ which is fixed pointwise by $K$.
The only possibilities for $H$ leaving $K$ invariant are cyclic groups
$C_2$, $C_3$ or $C_4$.
If $H \simeq C_3$ then $G$ is cyclic and we have a fixed point.
Otherwise, the only way for $G$ not to have a fixed point is for the
action of $H$ on $E$ to be translation.
If $H$ acts by translation then it contains an involution which acts by
translation since $H$ has order $2$ or $4$.
However, $i_{12}$ and $i_{13}$ do not fix points on $E$ and so must
generate the entire group $E[2]$ of $2$-torsion automorphisms.
Since these elements do not map non-trivially to $H$,
the group $H$ cannot act by translation.

It remains to consider $K$ trivial.  For $H$ cyclic, the surface $X$
must have a fixed point, so only $H \simeq S_3$ and $H \simeq D_{10}$
remain.
Since all elements of order 5 or 3 each form a single conjugacy class in
$W(D_5)$ we may assume $H$ contains $r=(12345)$ or $r=(134)$.
There is also an element in $H$ of the form $s=\iota_A(25)(34)$
where $A$ is a $(25)(34)$-invariant subset of $\{1,2,3,4,5\}$.
We require $srsr=1$, thus $\iota_Ar(\iota_A)=1$.
The subset $A=\emptyset$ is the only subset with an even number of
elements which is both $(25)(34)$-invariant and $r$-invariant.
Thus $G$ can be identified with a
subgroup of $S_5$ in $W(D_5)$.

Thus, $G$ permutes $5$ skew lines in $X$.  These can be blown
down to 5 points on a conic in $\bbP^2$.  The actions of $S_3$ or
$D_{10}$ on $\bbP^2$ fix a point outside of the invariant conic, so we
conclude that $X$ has a $G$-fixed point.
\end{proof}

Theorem~\ref{thm:obsMain} (and thus Theorem~\ref{thm:main}) in degree
$4$ is an immediate consequence of
Lemmas~\ref{lem:C22classification}~and~\ref{lem:dp4fixedPt}.
In fact, we obtain an additional characterization of $G$-unirationality
in this case:

\begin{cor}
Suppose $X$ is a del Pezzo surface of degree 4 with a faithful
$G$-action.
Then $X$ is $G$-unirational if and only if $X$ has a $G$-fixed point.
\end{cor}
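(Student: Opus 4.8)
The plan is to derive the corollary formally from Lemmas~\ref{lem:C22classification} and~\ref{lem:dp4fixedPt} together with the fixed-point machinery of Section~\ref{sec:prelims}; almost no new computation is required. One implication costs nothing: if $X$ has a $G$-fixed point, then $X$ is $G$-unirational by Corollary~\ref{cor:fix2Guni}. So the real content is the reverse implication, which I would prove by contraposition.

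Assume $X$ has no $G$-fixed point. By Lemma~\ref{lem:dp4fixedPt}, the group $G$ must then contain a subgroup $H \subseteq N$ isomorphic to $C_2^2$ of Type II or III, and by Lemma~\ref{lem:C22classification} such an $H$ realizes obstruction C or obstruction A on $X$, respectively. In either case $H$ is abelian, and since $X$ is $H$-equivariantly birationally equivalent to the corresponding fixed-point-free model surface, $X$ carries no $H$-fixed point: the existence of a fixed point on a smooth proper surface is an $H$-birational invariant by Proposition~\ref{prop:GoingDown}.

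I would then close the argument by restricting unirationality to $H$. If $X$ were $G$-unirational, a $G$-equivariant dominant rational map $V \dasharrow X$ from a faithful linear representation $V$ of $G$ would, upon restricting the action to $H$, exhibit $X$ as $H$-unirational: the representation $V$ remains faithful for the subgroup $H$, and the map remains $H$-equivariant and dominant. But $H$ is abelian and $X$ is proper, so Corollary~\ref{cor:abelSubgroups} would force an $H$-fixed point on $X$, contradicting the previous paragraph. Hence $X$ is not $G$-unirational, which completes the contrapositive. The argument is purely formal given the two lemmas; the only step requiring any care is verifying that fixed-point freeness survives the $H$-birational identification with the model examples, and this is exactly where Proposition~\ref{prop:GoingDown} enters.
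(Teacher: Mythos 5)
Your proposal is correct and follows essentially the same route as the paper, which presents this corollary as an immediate consequence of Lemmas~\ref{lem:C22classification} and~\ref{lem:dp4fixedPt}: one direction is Corollary~\ref{cor:fix2Guni}, and the other is the contrapositive chain (no fixed point $\Rightarrow$ Type II or III subgroup $H$ $\Rightarrow$ $H$ fixed-point free on $X$ $\Rightarrow$ not $H$-unirational by Corollary~\ref{cor:abelSubgroups} $\Rightarrow$ not $G$-unirational). Your only additions---invoking Proposition~\ref{prop:GoingDown} to transfer fixed-point freeness across the equivariant birational identification with the model obstruction surfaces, and checking that faithfulness and equivariance survive restriction to $H$---are precisely the details the paper leaves implicit, and both are handled correctly.
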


\section{Degree $3$}
\label{sec:deg3}

We now consider del Pezzo surfaces of degree $3$; in other words, smooth
cubic surfaces in $\bbP^3$.  Throughout this section $X$ is a smooth
cubic surface with a faithful $G$-action.

The classification of automorphism groups of smooth cubic surfaces
can be found in Table~9.6~of~\cite{Dol12Classical}.
The set of isomorphism classes affording a given group of automorphisms
can be written down explicitly as families depending on parameters.
For certain special values of the parameters, the corresponding family
may acquire additional automorphisms.
In this case, we say the original automorphism
group $A$ \emph{specializes} to the larger automorphism group $B$ and
write $A \to B$.
(See also the discussion preceeding
Proposition~2.3~of~\cite{DolDun14Fixed}.)

For the convenience of the reader,
in Figure~\ref{fig:cubicSpec}, we list all the possible automorphism
groups of cubic surfaces along with their specializations.
Here $H_3(3)$ is the Heisenberg group of $3\times 3$ unipotent matrices
over $\bbF_3$.

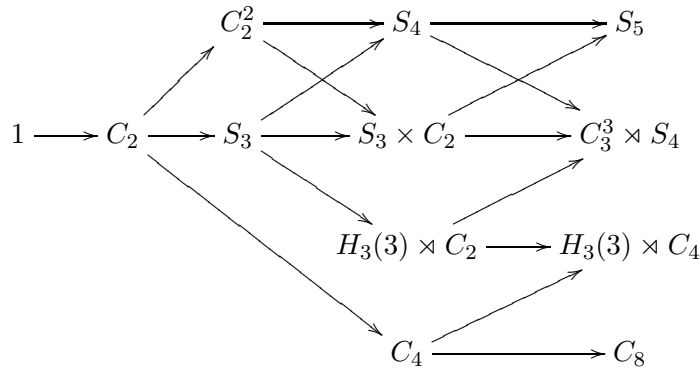
\begin{figure}[h]
\[
\xymatrix{
&
&
C_2^2 \ar[r] \ar[dr] &
S_4 \ar[r] \ar[dr] &
S_5 \\
1 \ar[r] &
C_2 \ar[ur] \ar[r] \ar[ddrr] &
S_3 \ar[r] \ar[ur] \ar[dr]&
S_3 \times C_2 \ar[r] \ar[ur] &
C_3^3 \rtimes S_4 \\
&
&
&
H_3(3) \rtimes C_2 \ar[r] \ar[ur]&
H_3(3) \rtimes C_4 \\
&
&
&
C_4 \ar[r] \ar[ur] &
C_8}
\]
\caption{Automorphism groups of cubic surfaces and their
specializations.}
\label{fig:cubicSpec}
\end{figure}

From Section~9.5.1~of~\cite{Dol12Classical},
the congugacy classes of cyclic subgroups in $W(E_6)$
all correspond to distinct conjugacy classes in $\PGL_4(\bbC)$
whenever they act on a cubic surface.
The number of elements from each conjugacy class in each family of
automorphisms can be found in Table 1 of~\cite{Hos97Automorphism}.
We will only be interested in the conjugacy classes of cyclic groups
of order $3$.  In the notation of~\cite{Car72Conjugacy},
the class $3A_2$ has eigenspaces of dimensions 1, 3;
the class $A_2$ has eigenspaces of dimensions 2,2; and
the class $2A_2$ has eigenspaces of dimensions 1,1,2.

A cubic surface $X$ is \emph{cyclic} if it is a triple cover of $\bbP^2$
branched over a smooth cubic curve $E$.
A cyclic surface can always be written in the form
\begin{equation} \label{eq:cyclic}
x_1^3 + x_2^3 + x_3^3 + x_4^3 + \alpha x_1x_2x_3 = 0
\end{equation}
in $\bbP^4$ for some parameter $\alpha$.
For a generic parameter $\alpha$,
the automorphism group is $\Aut(X) \simeq H_3(3) \rtimes C_2$.
For special values of $\alpha$ (for example $\alpha=0$), $X$ may have
additional automorphisms.

An important special case of the cyclic surfaces is the \emph{Fermat}
cubic surface given by
\begin{equation} \label{eq:Fermat}
x_1^3 + x_2^3 + x_3^3 + x_4^3 = 0
\end{equation}
in $\bbP^4$.
The automorphism group of the Fermat cubic surface is isomorphic to
$C_3^3 \rtimes S_4$ where the subgroup $C_3^3$ acts by multiplying the
coordinates $x_i$ by a primitive third root of unity $\epsilon$
and $S_4$ acts by permuting the
coordinates $\{ x_i \}$.

\begin{lem} \label{lem:C33classification}
The conjugacy classes of subgroups of $PGL_4(\bbC)$ which are isomorphic to
$C_3^2$ are listed in Table~\ref{tab:C33DP3}.
The table also lists the number of elements in each conjugacy class.
Each group acts only on smooth cubic surfaces with the given interpretation.
\end{lem}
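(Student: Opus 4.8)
The plan is to reduce the whole problem to diagonal actions and then to a short combinatorial classification of weight data, exactly parallel to Lemma~\ref{lem:C22classification} but with the extra input that here diagonalizability must be \emph{proved} rather than assumed.

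First I would show that \emph{every} subgroup $G \simeq C_3^2$ of $\PGL_4(\bbC)$ is diagonalizable. Let $\widetilde{G} \le \GL_4(\bbC)$ be its preimage, a central extension of $G$ by the scalars $\bbC^\times$. For lifts $\widetilde g,\widetilde h$ of $g,h \in G$ the commutator $[\widetilde g,\widetilde h]$ is a scalar $e(g,h)\,I$, and $e\colon G\times G \to \bbC^\times$ is an alternating bi-multiplicative pairing. Bi-multiplicativity and $\exp(G)=3$ force $e(g,h)^3 = e(g^3,h)=1$, while taking determinants in $[\widetilde g,\widetilde h]=e(g,h)I$ forces $e(g,h)^4 = \det[\widetilde g,\widetilde h] = 1$. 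Since $\gcd(3,4)=1$ we get $e\equiv 1$, so $\widetilde{G}$ is abelian; choosing lifts of finite order, $G$ is simultaneously diagonalizable. After a coordinate change $G$ therefore acts on $\bbP^3$ through four characters $\chi_1,\dots,\chi_4 \in \widehat{C_3^2}\simeq \bbF_3^2$, well-defined up to a common translation (the scalar ambiguity of $\PGL_4$). This coprimality is exactly what fails for $C_2^2\subset\PGL_2$ and $C_3^2\subset\PGL_3$, explaining why Obstructions~A and~B are genuinely non-diagonalizable phenomena.

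Second I would classify the weight tuples. Faithfulness on $\bbP^3$ is equivalent to the four points $\chi_i$ \emph{affinely spanning} $\bbF_3^2$. Up to the symmetries available — permuting coordinates ($S_4$), changing generators of $G$ ($\GL_2(\bbF_3)$), and translating — a four-point multiset is determined by its coincidences and by which triples are collinear, and since no line of $\bbF_3^2$ contains four points, a short case check leaves exactly three spanning classes: (A) a triangle with a repeated vertex, say $\{(0,0),(0,0),(1,0),(0,1)\}$; (B) four distinct points no three collinear (a ``parallelogram'' $\{(0,0),(1,0),(0,1),(1,1)\}$); and (C) a full line plus an external point $\{(0,0),(1,0),(2,0),(0,1)\}$. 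Each admits a smooth invariant cubic — the Fermat cubic \eqref{eq:Fermat} for (A) and (B), the cyclic cubic \eqref{eq:cyclic} for (C) — so all three occur. Evaluating the characters on $g\in G$ yields its eigenvalue multiplicities, which partition $4$ as $(3,1)$, $(2,2)$ or $(2,1,1)$, i.e.\ the classes $3A_2$, $A_2$, $2A_2$; a direct count gives the element-distributions $(4,2,2)$, $(0,4,4)$ and $(2,0,6)$ for (A), (B), (C), filling in the table. For the interpretations: in (A) the repeated character spans a pointwise-fixed projective line, which meets every cubic, so $X$ has a $G$-fixed point and is $G$-unirational by Corollary~\ref{cor:fix2Guni}; class (C) is literally Obstruction~D, where the $3A_2$ element fixes the branch curve $E$ and, by Lemma~\ref{lem:genus1fixed}, the others act on $E$ by translation; class (B) is fixed-point-free since smoothness of $\sum a_i x_i^3$ forces all $a_i\neq 0$, so none of the coordinate points (the only candidate fixed points) lies on $X$.

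The remaining point — and the main obstacle — is to identify class (B) with Obstruction~B. Here I would argue birationally: on the diagonal cubic the nine lines belonging to one pairing of the four coordinates are mutually skew, one generator fixes each of them while the other permutes them in three orbits of size three, so any two orbits form a $G$-stable sextuple of skew lines. Blowing it down produces $\bbP^2$ carrying an induced faithful $C_3^2$-action, which is fixed-point-free because fixed-point existence is a birational invariant of smooth proper surfaces (Proposition~\ref{prop:GoingDown}); the unique fixed-point-free $C_3^2$ on $\bbP^2$ is Obstruction~B. Matching the three rows to the classes of~\cite{Bla06Finite} then completes the table. The delicate verifications are the exhaustiveness of the affine classification in the second step and the skew-sextuple blow-down in the last; the diagonalization in the first step is the conceptual crux that makes the entire reduction possible.
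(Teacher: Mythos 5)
Your first two steps are sound and essentially reproduce the paper's argument in dual language: the paper lifts $G$ to $\SL_4(\bbC)$ (using that the kernel $\mu_4$ of $\SL_4(\bbC)\to\PGL_4(\bbC)$ has order prime to $3$ --- your $\gcd(3,4)=1$ observation in Schur--Zassenhaus form) and then classifies the resulting subgroups of $\bbF_3^3$ via the three $S_4$-orbits on $\bbP^2(\bbF_3)$; your affine classification of the character multiset $\{\chi_1,\dots,\chi_4\}$ up to translation, $S_4$, and $\GL_2(\bbF_3)$ is equivalent to this, and your element counts and the treatment of classes (A) and (C) match Types I and II of the paper. One small point on (C): Lemma~\ref{lem:genus1fixed} converts ``no fixed point on $E$'' into ``translation,'' so you must first run the same invariant-monomial/smoothness argument you used for (B) --- here the invariants are $x_i^3$ and $x_1x_2x_3$ --- to see that the coordinate points, the only candidate fixed points, do not lie on $X$; this is exactly what the paper does for its Type~II.

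The genuine gap is your last step. No smooth cubic surface contains more than six pairwise skew lines, so the claim that ``the nine lines belonging to one pairing of the four coordinates are mutually skew'' cannot hold. Concretely, the lines of the pairing $(12)(34)$ on the Fermat cubic are $\ell_{\omega,\omega'}=\{x_1+\omega x_2=x_3+\omega'x_4=0\}$ with $\omega^3=\omega'^3=1$, and $\ell_{\omega_1,\omega_1'}$ meets $\ell_{\omega_2,\omega_2'}$ whenever $\omega_1=\omega_2$ or $\omega_1'=\omega_2'$ (e.g.\ at $(-\omega_1:1:0:0)$ in the first case); the incidence graph is the $3\times 3$ rook's graph, so a skew subset has at most three lines. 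Likewise, for the Type~III action the pairing whose lines are individually stabilized by one group element does split into three $3$-line orbits, each orbit skew, but the union of \emph{two orbits from the same pairing} always contains incident pairs, so the ``$G$-stable sextuple'' you describe does not exist. The construction can be repaired: a line $x_1+\omega x_3=x_2+\omega'x_4=0$ of the pairing $(13)(24)$ meets a line $x_1+\alpha x_4=x_2+\beta x_3=0$ of the pairing $(14)(23)$ if and only if $\omega\omega'=\alpha\beta$, and $G$ permutes each of these two pairings in three $3$-line orbits on which the product invariant is constant; taking one orbit from each of the \emph{two different} pairings, with distinct invariants, gives a genuine $G$-stable sextuple of skew lines, after which your Going-Down and Heisenberg-uniqueness arguments go through. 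The paper sidesteps this delicacy by arguing in the opposite direction: it blows \emph{up} the two $3$-point orbits of Obstruction~B on $\bbP^2$, obtaining a smooth cubic with a fixed-point-free $C_3^2$-action, and identifies it as Type~III by elimination (Type~I has fixed points, and Type~II contains elements fixing an elliptic curve pointwise, which the blown-up action does not).
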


\begin{table}[h]
\begin{tabular}{|c|ccc|c|c|}
\hline
Type & $3A_2$ & $2A_2$ & $A_2$ & Interpretation & \cite{Bla06Finite} \\
\hline
I & 4 & 2 & 2 & has fixed points & 3.33.1 \\
II & 2 & 6 & 0 & obstruction D & 3.33.2 \\
III & 0 & 4 & 4 & obstruction B & 0.V9 \\
\hline
\end{tabular}
\caption{Conjugacy classes $C_3^2$ in $PGL_4(\bbC)$}
\label{tab:C33DP3}
\end{table}

\begin{proof}
We begin with conjugacy in $\PGL_4(\bbC)$.
We follow the approach from pg. 500 of \cite{DolIsk09Finite}.
Since the kernel of the map $\SL_4(\bbC) \to \PGL_4(\bbC)$ has order
coprime to $3$, we may assume that $G$ has a preimage in $\SL_4(\bbC)$
consisting of diagonal matrices.
Consider the characters of $G$
acting on the given basis in $\bbF_3^4$.
Since the product of the characters must be trivial,
we may identify them with the space $\bbF_3^3$
viewed as an $S_4$-module.
There are 13 one-dimensional subspaces of $\bbF_3^3$
since $\bbP^2(\bbF_3)$ has 13 elements.
They have 3 orbits
represented by
$\langle(1,2,0,0)\rangle$,
$\langle(1,1,1,0)\rangle$ and
$\langle(1,1,2,2)\rangle$ in $\bbF_3^4$.
These correspond to Types I--III respectively.

We now describe how these groups act on a cubic surface $X$.
In each of these examples, $D(a,b,c,d)$ denotes a diagonal matrix with
entries $(a,b,c,d)$ and $\epsilon$ is a primitive third root of unity.

For Type I, the matrices $D(1,1,1,\epsilon)$ and $D(1,1,\epsilon,1)$
generate the group $G \simeq C_3^2$.
Here $X$ has up to three $G$-fixed points on the line $x_3=x_4=0$.

For Type II, the matrices
$D(\epsilon,\epsilon,\epsilon,1)$ and $D(1,\epsilon,\epsilon^2,1)$
generate the group $G \simeq C_3^2$.
The group $G$ acts on the variables $x_i$ via distinct characters
so the fixed points must be one of $(1:0:0:0),\ \ldots,\ (0:0:0:1)$.
The invariant monomials for this group action are
\[ x_1^3,\ x_2^3,\ x_3^3,\ x_4^3,\ x_1x_2x_3 \ . \]
If the resulting cubic surface is smooth then all but $x_1x_2x_3$
must appear in the corresponding cubic form.
Note that the point $(1:0:0:0)$ cannot lie on the surface since $x_1^3$
is the only monomial which is non-zero at this point.
Similarly, none of the other points lie on the surface and we conclude that
the action has no fixed points.
The matrix $D(\epsilon,\epsilon,\epsilon,1)$ fixes pointwisely a smooth
cubic curve and thus, by Lemma~\ref{lem:genus1fixed}, this group must be
obstruction D.

For Type III, the matrices
$D(\epsilon,\epsilon^2,1,1)$ and $D(1,1,\epsilon,\epsilon^2)$
generate the group $G \simeq C_3^2$.
The invariant monomials for this group action are
\[ x_1^3,\ x_2^3,\ x_3^3,\ x_4^3 \ . \]
and we conclude that there are no fixed points by the same argument as
Type II.
Let $\bbP^2$ have an action of $G$ without fixed points
(in other words, the surface from Obstruction B).
One checks that on $\bbP^2$ there are only 2 $G$-orbits consisting of
precisely 3 points each.
Blowing up these 6 points we obtain a cubic surface.
Since Type I and Type II have already been described,
we conclude that the surface of Type III has obstruction B.
\end{proof}

\begin{lem} \label{lem:3groupCubic}
If $G$ is a $3$-group with an action on a cubic surface $X$,
then either $G$ has a fixed point or it contains a subgroup
of Type II or III.
\end{lem}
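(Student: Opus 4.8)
The plan is to linearize the action and then split on whether $G$ is abelian, treating the two cases by quite different geometry. Lift the action to a subgroup $\widetilde{G}$ of $\GL_4(\bbC)$; since $\gcd(4,|G|)=1$ one may even arrange $\widetilde{G}\subseteq\SL_4(\bbC)$, and the argument uses only that $\widetilde{G}$ is a $3$-group. Decompose $\bbC^4$ into $\widetilde{G}$-irreducibles. As every irreducible representation of a $3$-group has dimension a power of $3$ and $4<9$, there are exactly two possibilities: either $\bbC^4$ splits into four characters, so that $\widetilde{G}$ is diagonalizable and $G$ is abelian; or $\bbC^4=L\oplus W$ with $L$ a line and $W$ an irreducible $3$-dimensional summand, so that $G$ is non-abelian. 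If $G$ is cyclic it has a fixed point by Proposition~\ref{prop:ratCyclic}, so I assume $G$ non-cyclic throughout.

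In the abelian case $G$ is diagonal with characters $\chi_1,\dots,\chi_4$ on the coordinates. If two characters agree, say $\chi_i=\chi_j$, then $G$ fixes the line $\overline{e_ie_j}$ pointwise; since this line meets the cubic $X$, it produces a $G$-fixed point. Hence, absent a fixed point, the four characters are pairwise distinct, and the same reasoning shows a subgroup $A\cong C_3^2$ has no fixed point precisely when $\chi_1|_A,\dots,\chi_4|_A$ remain distinct; by Lemma~\ref{lem:C33classification} such an $A$ is then of Type II or III. When $G$ is elementary abelian, so that $\widehat{G}\cong\bbF_3^2$ or $\bbF_3^3$, I produce $A$ by counting: two restrictions collapse only when $\chi_i-\chi_j$ lies in the annihilator line of $A$, and since there are at most $\binom{4}{2}=6$ such differences among the $13$ lines of $\bbF_3^3$, a good $A$ exists. (One must also check that an abelian $3$-group acting on a cubic without a fixed point is elementary abelian, which I would deduce from the structure of the Sylow $3$-subgroups of the automorphism groups in Figure~\ref{fig:cubicSpec}, an element of order $9$ generating a cyclic group governed by Proposition~\ref{prop:ratCyclic}.)

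In the non-abelian case, choose a central element $z$ of order $3$. By Schur's lemma $z$ acts as a scalar on $W$ and, being non-trivial in $\PGL_4(\bbC)$, by a different scalar on $L$; thus $z$ lies in class $3A_2$ and its fixed locus in $\bbP^3$ is $\{p\}\cup\bbP(W)$, where $p:=\bbP(L)$. If $p\in X$ then $p$ is a $G$-fixed point and we are done, so assume $p\notin X$. Then projection from $p$ realizes $X$ as a cyclic triple cover of $\bbP(W)\cong\bbP^2$ with deck transformation $z$, i.e.\ a cyclic cubic as in~\eqref{eq:cyclic}, and the fixed locus of $z$ on $X$ is the smooth genus-$1$ branch curve $C=X\cap\bbP(W)$. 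Since every $G$-fixed point of $X$ is in particular fixed by $z$, it lies on $C$; and because $z$ fixes $C$ pointwise, a point of $C$ is $G$-fixed exactly when it is fixed by $\bar{G}:=\im(G\to\Aut(C))$. Thus $G$ has a fixed point on $X$ if and only if the $3$-group $\bar{G}$ has one on $C$.

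So I may assume $\bar{G}$ acts on the genus-$1$ curve $C$ without a fixed point. Fixing an origin as in the proof of Lemma~\ref{lem:genus1fixed}, if the translation subgroup $T\le\bar{G}$ were trivial then $\bar{G}$ would consist of group automorphisms and fix the origin; hence $T\neq 1$. Pick $\bar{g}_2\in T$ of order $3$ and a lift $g_2\in G$. The subgroup $\langle z,g_2\rangle$ is abelian and cannot be cyclic, for a cyclic group has a fixed point on $X$ by Proposition~\ref{prop:ratCyclic}, and such a point would be fixed by the non-trivial translation $\bar{g}_2$ on $C$, which is impossible by Lemma~\ref{lem:genus1fixed}. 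Therefore $\langle z,g_2\rangle\cong C_3^2$, with $z$ fixing the elliptic curve $C$ pointwise and $g_2$ acting on $C$ as translation by a $3$-torsion point; this is exactly obstruction D, i.e.\ Type II. I expect the main obstacle to be precisely this non-abelian case: recognizing that a central $3A_2$-element forces the cyclic-cubic structure and reduces the fixed-point question to a $3$-group acting on a genus-$1$ curve, after which Lemma~\ref{lem:genus1fixed} finishes the argument; the abelian case is comparatively routine once one observes that repeated characters create pointwise-fixed lines meeting $X$.
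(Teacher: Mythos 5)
Your route is genuinely different from the paper's (which embeds any $3$-group abstractly into $C_3^3\rtimes C_3$, a Sylow $3$-subgroup of $W(E_6)$ realized on the Fermat cubic, and then classifies its subgroups by conjugacy classes of elements), and the reduction of the non-abelian case to a $3$-group acting on the genus-$1$ branch curve is an attractive idea. However, there are two genuine gaps, both caused by possible elements of order $9$. The more serious one is the step ``$\langle z,g_2\rangle$ is abelian and cannot be cyclic, therefore $\langle z,g_2\rangle\cong C_3^2$'': this is a non sequitur, since an abelian non-cyclic $3$-group on two generators can be $C_9\times C_3$, and nothing in your argument prevents every lift $g_2$ of $\bar g_2$ from having order $9$. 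Such elements are not hypothetical: on the Fermat cubic, $D(\epsilon,1,1,1)\cdot(123)$ has order $9$ in $\PGL_4(\bbC)$ and its cube equals $z^{-1}$. To close the gap, first show that the kernel $K$ of $G\to\Aut(C)$ is cyclic: an element fixing the smooth cubic curve $C$ pointwise fixes the plane $\bbP(W)$ pointwise and fixes $p$, hence acts as $(\lambda\Id_W,\mu)$, and $(\lambda,\mu)\mapsto\mu/\lambda$ embeds $K$ into $\bbC^\times$. Since a cyclic $3$-group has a unique subgroup of order $3$, namely $\langle z\rangle$, an order-$9$ lift would satisfy $z\in\langle g_2^3\rangle\subset\langle g_2\rangle$, making $\langle z,g_2\rangle$ cyclic, which your fixed-point/translation argument already excludes; hence $g_2^3=1$ and only then does $\langle z,g_2\rangle\cong C_3^2$ follow. (A cosmetic issue in the same case: trivial $T$ does not force $\bar G$ to fix the origin; rather it makes $\bar G$ isomorphic to a subgroup of the cyclic group $\Aut(C,O)$, whose generator, not being a non-trivial translation, has a fixed point by Lemma~\ref{lem:genus1fixed}.)

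The second gap is the deferred claim that an abelian $3$-group acting without a fixed point is elementary abelian. Proposition~\ref{prop:ratCyclic} handles $C_9$ itself but says nothing about, say, a diagonal $C_9\times C_3$, and extracting the claim from Figure~\ref{fig:cubicSpec} would force you to redo exactly the centralizer computation inside $C_3^3\rtimes C_3$ that constitutes the paper's treatment of this point. Fortunately there is a clean fix entirely inside your framework: the defining cubic form $F$ is $G$-semi-invariant, and $e_i\in X$ if and only if the coefficient of $x_i^3$ in $F$ vanishes; so if no coordinate point lies on $X$ (which you may assume, as a coordinate point on $X$ would be a $G$-fixed point once the characters are distinct), then all four monomials $x_i^3$ occur in $F$, forcing $3\chi_1=\cdots=3\chi_4$ in $\widehat G$. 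Thus every difference $\chi_i-\chi_j$ is $3$-torsion, and since faithfulness of the projective action means these differences generate $\widehat G$, the group $G$ is elementary abelian. With these two repairs (and they are repairs, not reworkings) your proof is correct and stands as a nice linear-algebraic alternative to the paper's Sylow-theoretic argument.
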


\begin{proof}
The automorphism group of the Fermat cubic
has a Sylow $3$-subgroup $P$ isomorphic to $C_3^3 \rtimes C_3$
where the normal subgroup $C_3^3$ acts diagonally and
the quotient $C_3$ acts by permuting the first three basis vectors.
Recall that $W(E_6)$ has order $51840=2^7 \times 3^4 \times 5$.
Since $P$ has order $3^4$ and $\Aut(X) \hookrightarrow W(E_6)$,
any $3$-group acting on a cubic surface is isomorphic to a subgroup
of $P$.

We will show that all subgroups $G$ of $P$ are either cyclic, of Type I, or
contain a subgroup of Type II or III.
For a different surface, the particular embedding of $G$ into
$\PGL_4(\bbC)$ may be different, but the
conjugacy classes of all the elements must be the same.
Since cyclic groups and Type I groups always have fixed points,
this suffices to prove the lemma.

First, we note that if $G$ is $3$-elementary abelian then either $G$ is
cyclic, is of Type I--III, or has rank $3$.  If $G$ has rank $3$
then it contains subgroups of Type II and III.
Suppose $G$ is abelian with an element of order $9$.
Note that the only element of $C_3^3$ which is centralized by the
permutation action of $(123)$ is $D(\epsilon,\epsilon,\epsilon,1)$.
An element $g$ of order $9$ must be of the form $h(123)$ where $h$ is an
element of the diagonal group $C_3^3$.
Any element that commutes with $g$ must be a power of
$D(\epsilon,\epsilon,\epsilon,1)$, so we conclude that the cyclic group
$C_9$ is the only abelian group acting on a cubic surface that is not
$3$-elementary.

It remains to consider non-abelian groups.  Since the full group $P$
contains subgroups of types II and III, it suffices to consider
non-abelian groups of order $27$.
There are only 2 such groups, and each has a unique faithful $3$-dimensional
representation up to Galois conjugacy.

The first group has $3$-dimensional representation given by:
\[
\left\langle
g=\begin{pmatrix}
\zeta_9 & 0 & 0\\
0 & \zeta_9^4 & 0\\
0 & 0 & \zeta_9^7
\end{pmatrix},\
h=\begin{pmatrix}
0 & 1 & 0\\
0 & 0 & 1\\
1 & 0 & 0
\end{pmatrix}
\right\rangle
\]
where $\zeta_9$ is a root of unity.  
The subgroup $\langle g^3, h \rangle$ is of type II.

The other non-abelian group of order 27 is given by:
\[
\left\langle
g=\begin{pmatrix}
1 & 0 & 0\\
0 & \epsilon & 0\\
0 & 0 & \epsilon^2
\end{pmatrix},\
h=\begin{pmatrix}
0 & 1 & 0\\
0 & 0 & 1\\
1 & 0 & 0
\end{pmatrix}
\right\rangle \ .
\]
The subgroup $\langle g, ghg^{-1}h^{-1} \rangle$ is of type II.
\end{proof}

\begin{lem} \label{lem:S5unirational}
Suppose $X$ is a smooth cubic surface with an action of a finite group
$G$.
If $G$ is a subgroup of $S_5$ then $X$ is $G$-unirational.
\end{lem}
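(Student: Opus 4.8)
The plan is to induct on $|G|$, using Theorem~\ref{thm:cubicIndex2} to strip off subgroups of index $2$ and Corollary~\ref{cor:cyc2Guni} for the base case. If $G$ is cyclic, then $X$ is $G$-unirational by Corollary~\ref{cor:cyc2Guni}. If $G$ is non-cyclic and not isomorphic to $A_4$ or $A_5$, I would first observe (by running through the subgroups of $S_5$) that the abelianization $G^{\mathrm{ab}}$ has even order, so $G$ contains a subgroup $H$ of index $2$. Restriction gives a faithful $H$-action on $X$ with $H \subseteq S_5$, so $X$ is $H$-unirational by the inductive hypothesis, and Theorem~\ref{thm:cubicIndex2} upgrades this to $G$-unirationality. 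Since $A_4$ and $A_5$ are exactly the index-$2$ subgroups of $S_4$ and $S_5$, this reduces the whole statement to the two subgroups of $S_5$ possessing no subgroup of index $2$, namely $A_4$ and $A_5$.

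For $G \cong A_5$ I would use that the Clebsch diagonal cubic is the unique smooth cubic surface carrying a faithful $C_5$-action, hence the only one carrying an $A_5$-action (Theorem~9.5.8 of~\cite{Dol12Classical}). The construction in the proof of Theorem~\ref{thm:Hermite} already produces a dominant $A_5$-equivariant map $\bbC^3 \dasharrow \bbP^2 \dasharrow X$, so this case is done.

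The crux is $G \cong A_4$. Here $X$ typically has no $G$-fixed point: the normal subgroup $C_2^2 \triangleleft A_4$ is abelian and, being $C_2^2$-unirational by Corollary~\ref{cor:cyc2Guni} together with Theorem~\ref{thm:cubicIndex2}, it has a nonempty fixed locus by Corollary~\ref{cor:abelSubgroups}; but that locus consists of three points which $A_4/C_2^2 \cong C_3$ permutes cyclically, so $A_4$ fixes nothing and Corollary~\ref{cor:fix2Guni} does not apply. Instead, the plan is to show that $X$ is $A_4$-equivariantly birational to $\bbP(W) \cong \bbP^2$, where $W$ is the faithful irreducible $3$-dimensional representation of $A_4$. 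Granting this, $\bbP(W)$ is $A_4$-unirational via the dominant equivariant map $W \dasharrow \bbP(W)$ (note $\bbP(W)$ carries no fixed point, since $W$ is irreducible, so this really is the construction and not Corollary~\ref{cor:fix2Guni}), whence $X$ is $A_4$-unirational.

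The hard part will be establishing this birational reduction. Using that every cubic with a faithful $A_4$-action has $\Aut(X) \supseteq S_4$ (Figure~\ref{fig:cubicSpec}), I would argue that $X$ is not $A_4$-minimal by showing that the $A_4$-action on the $E_6$-lattice $K_X^\perp$ has a nonzero fixed vector, so that $\Pic(X)^{A_4}$ has rank $>1$, and then exhibit an $A_4$-invariant collection of six skew lines whose contraction identifies $X$ with the blow-up of $\bbP(W)$ at an $A_4$-orbit. Should the contraction only reach a minimal del Pezzo $X'$ of degree $\ge 4$ (or a conic bundle), I would fall back on the fact that the $C_2^2$-fixed points survive under any equivariant birational map (Proposition~\ref{prop:GoingDown}); thus $C_2^2$ is of fixed-point type on $X'$, so $A_4$ meets none of obstructions A, B, C, and Theorem~\ref{thm:obsMain} in degree $\ge 4$ gives $A_4$-unirationality of $X'$ and hence of $X$. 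This lattice-and-lines analysis of the embedding $A_4 \hookrightarrow W(E_6)$ is the step I expect to require the most care.
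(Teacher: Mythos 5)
Your reduction to $G$ cyclic, $A_4$, or $A_5$ (induction on $|G|$, stripping index-$2$ subgroups via Theorem~\ref{thm:cubicIndex2}) and your treatment of $A_5$ are correct and coincide with the paper's proof. Your fallback mechanism for $A_4$ is also sound and is a genuinely nice idea not in the paper: the unique $C_2^2 \subset A_4$ has a fixed point on $X$ (your argument via Corollary~\ref{cor:cyc2Guni}, Theorem~\ref{thm:cubicIndex2} and Corollary~\ref{cor:abelSubgroups} is valid), Proposition~\ref{prop:GoingDown} propagates that fixed point to any equivariant blow-down $X'$ (which is automatically a del Pezzo surface of degree $\ge 4$, so Table~\ref{tab:obsTable} applies to it whether or not it carries a conic fibration), and since $A_4$ contains no $C_3^2$, Theorem~\ref{thm:obsMain} in degree $\ge 4$ --- proved in Sections~\ref{sec:deg5to9}--\ref{sec:deg4}, so no circularity --- gives $A_4$-unirationality of $X'$ and hence of $X$. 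This would reduce the $A_4$ case to showing merely that $X$ admits \emph{one} nontrivial $A_4$-equivariant blow-down, rather than a full reduction to $\bbP^2$.

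That reduction, however, is exactly where the gap is: you never prove that a cubic $A_4$-surface fails to be $A_4$-minimal, and the criterion you propose cannot prove it. By the Manin--Iskovskikh classification \cite{Man67Rational,Isk79Minimal}, a nonzero $A_4$-fixed vector in $K_X^\perp$, i.e.\ $\rank \Pic(X)^{A_4} \ge 2$, does not imply non-minimality, because minimal conic bundle $G$-surfaces have invariant Picard rank exactly $2$. Worse, for these surfaces the invariant rank \emph{is} exactly $2$: once the paper's invariant sextet of skew lines is known, $\Pic(X)^{A_4} = \bbZ H \oplus \bbZ(E_1 + \cdots + E_6)$ since $A_4$ permutes the six lines transitively. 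So no rank computation alone can ever decide minimality here; you would additionally have to exclude an $A_4$-stable conic fibration, equivalently an $A_4$-fixed line (the fiber class of any conic fibration on a cubic surface is $-K_X - \ell$ for a line $\ell$), and that amounts to knowing the $A_4$-orbit structure on the $27$ lines --- which is essentially the same unproved input your main route (``exhibit six invariant skew lines'') requires. The paper supplies this input by an argument absent from your proposal: the representation theory of $A_4$ forces every smooth cubic $A_4$-surface into the single irreducible family $a x_4^3 + b x_4(x_1^2+x_2^2+x_3^2) + c\, x_1x_2x_3 = 0$; this family contains the Clebsch surface, whose $A_5$-invariant sixer is \emph{a fortiori} $A_4$-invariant, and irreducibility of the family of line configurations transports an $A_4$-invariant set of six skew lines to every member, yielding the equivariant birational map to $\bbP^2$. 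Without this specialization argument, or an equivalent explicit analysis of the embedding $A_4 \hookrightarrow W(E_6)$, your $A_4$ case is incomplete.
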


\begin{proof}
By Theorem~\ref{thm:cubicIndex2}, it suffices to consider groups $G$
which do not have subgroups of index $2$.  Thus, $G$ is one of
$C_3$, $C_5$, $A_4$ or $A_5$.  The cyclic groups have fixed points so
they are $G$-unirational.  The group $A_5$ only occurs on the Clebsch
cubic surface which is $A_5$-unirational by Hermite's theorem.
It remains to consider $G \simeq A_4$.

There is a unique faithful irreducible
representation $\sigma$ of $A_4$ of dimension $3$.
Fixing a basis $\{x_1,x_2,x_3\}$, the group is generated by the two maps
\begin{align*}
g \colon (x_1,x_2,x_3) &\mapsto (-x_1,-x_2,x_3)\\
h \colon (x_1,x_2,x_3) &\mapsto (x_2,x_3,x_1) \ .
\end{align*}
The action of $A_4$ on $\bbP^3$ is of the form $\sigma \oplus \chi$ where
$\chi$ is a $1$-dimensional representation with basis element $x_4$.
The semi-invariants of $\sigma$ of degree $\le 3$ are
\[ 1,\ x_1^2+x_2^2+x_3^2,\ x_1x_2x_3 \ . \]
We conclude that $X$ is of the form
\[ F = a x_4^3 + b x_4(x_1^2+x_2^2+x_3^2) + c x_1x_2x_3 \]
for some parameters $a,b,c$ and that $\chi$ must be the trivial
representation.

For general parameters $a,b,c$, the cubic surface $X$ is non-singular.
By construction, this family contains all smooth cubic $A_4$-surfaces.
In particular, this family contains the Clebsch cubic surface.
We obtain an irreducible family of configurations of 27 lines with an action of
$A_4$.
There are 6 skew lines invariant under $A_5$ on the Clebsch surface,
so they are \emph{a fortiori} invariant under $A_4$.
Since the family of configurations has an irreducible base, there is an
$A_4$-invariant set of 6 skew lines on any cubic $A_4$-surface in the
family of surfaces.
Thus, we have an $A_4$-equivariant birational equivalence with $\bbP^2$.
The $A_4$-action on $\bbP^2$ lifts to $\bbC^3$ so we conclude that
$X$ is $A_4$-unirational.
\end{proof}

Finally, we prove the main theorem for degree $d=3$.

\begin{proof}[Proof of Theorem~\ref{thm:obsMain} for $d = 3$.]
We simply need to prove that obstructions B and D
are the only obstructions to $G$-unirationality.

Consulting Figure~\ref{fig:cubicSpec}, we find that
all cubic $G$-surfaces fall into three (overlapping) cases:
either $G \subset S_5$, $G$ is a cyclic group, or $X$ is a cyclic surface.
In the first two cases, the surface is
$G$-unirational unconditionally.
The first case follows from Lemma~\ref{lem:S5unirational}.
The second case follows since every cyclic group has a fixed point.

It remains only to consider the case where $X$ is a cyclic surface.
First, we consider the case where $X$ is not the Fermat cubic.
In this case $\Aut(X) \simeq H_3(3) \rtimes \langle g \rangle$
where $g$ is an element of order $2$ or $4$.
In either case, by Theorem~\ref{thm:cubicIndex2} we may assume
$G \subset H_3(3)$.
The implication now follows from Lemma~\ref{lem:3groupCubic}
since all cyclic and Type I groups have fixed points.

It remains to consider the Fermat cubic.  We have an exact sequence
\[ 1 \to K \to G \to H \to 1 \]
where $K$ is a subgroup of $C_3^3$ and $H$ is a subgroup of $S_4$.
By Theorem~\ref{thm:cubicIndex2}, it suffices to assume that
$H$ is trivial, $C_3$ or $A_4$.
We may again handle the cases when $H$ is trivial or $C_3$
via Lemma~\ref{lem:3groupCubic}.

Only the case $H \simeq A_4$ remains.
The only $A_4$-invariant subgroups $K$ of $C_3^3$ are the trivial group
and the full group $C_3^3$.
If $K$ is trivial then $X$ is $G$-unirational by
Lemma~\ref{lem:S5unirational}.
If $K$ is $C_3^3$ then $G$ contains a subgroup of Type II.
\end{proof}

\subsection*{Acknowledgements}

The author would like to thank I.~Dolgachev for useful discussions, and
Z.~Reichstein for helpful comments, especially a simplification of
the proof of Theorem~\ref{thm:cubicIndex2}.
The author would also like to thank the anonymous referees for useful
comments.

\end{document}